\date{}
\renewcommand{\uppercasenonmath}[1]{}
\numberwithin{equation}{section} \theoremstyle{plain}
\newtheorem*{thm*}{Theorem A}
\newtheorem*{thm**}{Theorem B}
\newtheorem{thm}{Theorem}[section]
\newtheorem{cor}[thm]{Corollary}
\newtheorem*{cor*}{Corollary}
\newtheorem{lem}[thm]{Lemma}
\newtheorem*{lem*}{Lemma}
\newtheorem*{prop*}{Proposition}
\newtheorem*{rem*}{Remark}
\newtheorem{exa}[thm]{Example}
\newtheorem*{exa*}{Example}
\newtheorem*{obs*}{Observation}
\newtheorem{df}[thm]{Definition}
\newtheorem*{df*}{Definition}
\newtheorem*{conj*}{Conjecture}
\newtheorem*{ack*}{ACKNOWLEDGEMENTS}
\newcommand{\pf}{\noindent\begin {proof}}
\newcommand{\epf}{\end{proof}}
\begin{document}
\begin{center}
{\Large \bf Gorenstein projective modules and Frobenius extensions
 \footnotetext{
E-mail address: renwei@fudan.edu.cn.}}

\vspace{0.5cm}     Ren Wei\\
{\small School of Mathematical Sciences, Chongqing Normal University, Chongqing 401331, China\\
School of Mathematical Sciences, Fudan University, Shanghai 200433, China}
\end{center}


\bigskip
\centerline { \bf  Abstract}
\leftskip10truemm \rightskip10truemm
We prove that for a Frobenius extension, if a module over the extension ring is Gorenstein projective, then its underlying module over the the base ring is Gorenstein projective; the converse holds if the Frobenius extension is either left-Gorenstein or separable (e.g. the integral group ring extension $\mathbb{Z}\subset \mathbb{Z}G$).

Moreover, for the Frobenius extension $R\subset A=R[x]/(x^2)$, we show that: a graded $A$-module is Gorenstein projective in $\mathrm{GrMod}(A)$, if and only if its ungraded $A$-module is Gorenstein projective, if and only if its underlying $R$-module is Gorenstein projective. It immediately follows that an $R$-complex is Gorenstein projective if and only if all its items are Gorenstein projective $R$-modules.
\bigskip

{\noindent \it Key Words:} Gorenstein projective module; Frobenius extension; graded module\\
{\it 2010 MSC:}  16G50, 13B02, 16W50.\\

\leftskip0truemm \rightskip0truemm \vbox to 0.2cm{}

\section { \bf Introduction}
A module $M$ is said to be Gorenstein projective \cite{EJ00} if there exists a totally acyclic complex of projective modules $\mathbf{P}:=\cdots \rightarrow P_{1}\rightarrow P_{0}\rightarrow P_{-1}\rightarrow \cdots$ such that $M = \mathrm{Ker}(P_{0}\rightarrow P_{-1})$. The study of Gorenstein projective modules plays an important role in some areas such as representation theory of Artin algebras, the theory of stable and singularity categories, and cohomology theory of commutative rings. Especially, for finitely generated Gorenstein projective modules, there are several different terminologies in the literature, such as modules of G-dimension zero, maximal Cohen-Macaulay modules and totally reflexive modules.

For a given ring $R$, it is important to find a ``well-behaved'' extension ring $A$ in the sense that some useful information can transfer between $R$ and $A$. In this paper, we intend to study relations of Gorenstein projective modules along Frobenius extensions of rings.
The theory of Frobenius extensions was developed by Kasch \cite{Kas54} as a generalization of Frobenius algebras, and was further studied by Nakayama-Tsuzuku \cite{NT60} and Morita \cite{Mor67}. A classical example of Frobenius extension is the integral group ring extension $\mathbb{Z}\subset\mathbb{Z}G$ for a finite group $G$. Other examples include Hopf subalgebras \cite{Sch92}, finite extensions of enveloping algebras of Lie super-algebras \cite{BF93}, enveloping algebras of Lie coloralgebras \cite{FMS97}. We refer to a lecture due to Kadison \cite{Kad99}.

We are partly inspired by an observation of Buchweitz \cite[Section 8.2]{Buc87}: for a finite group $G$, a $\mathbb{Z}G$-module, or equivalently an integral representation of $G$, is maximal Cohen-Macaulay over $\mathbb{Z}G$  if and only if the underlying $\mathbb{Z}$-module is maximal Cohen-Macaulay,  or equivalently, the underlying $\mathbb{Z}$-module is free. In \cite{Chen13}, Chen introduces a generalization of Frobenius extension, called the totally reflexive extension of rings, and proves that totally reflexive modules transfer along such extension. However, is this true for not necessarily finitely generated Gorenstein projective modules? As it is pointed out at the end of \cite{Chen13}, a different argument is needed.

The first main result gives a partial answer to the above question; see Theorems \ref{thm 2.1} and \ref{thm 2.3}.
\begin{thm*}\label{thm A} Let $R\subset A$ be a Frobenius extension, $M$ a left $A$-module. If $M$ is Gorenstein projective in $\mathrm{Mod}(A)$, then the underlying $R$-module $M$ is Gorenstein projective; the converse holds if $R\subset A$ is either a left-Gorenstein or a separable Frobenius extension.
\end{thm*}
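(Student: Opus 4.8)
My plan is to analyze the two adjoint functors attached to the extension --- restriction $\mathrm{Res}\colon \mathrm{Mod}(A)\to\mathrm{Mod}(R)$ and induction $A\otimes_R-\colon \mathrm{Mod}(R)\to\mathrm{Mod}(A)$ --- and to exploit the two structural features of a Frobenius extension: that $A$ is finitely generated projective as a one-sided $R$-module, and that induction and coinduction coincide, i.e. there is a natural isomorphism $A\otimes_R-\cong\Hom_R(A,-)$. From the first feature, $\mathrm{Res}$ is exact and carries projective $A$-modules to projective $R$-modules (a free $A$-module restricts to a projective $R$-module because $A$ is projective over $R$, and direct summands are preserved), while induction is exact (as $A$ is $R$-flat) and carries projective $R$-modules to projective $A$-modules.

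For the forward implication (this is Theorem \ref{thm 2.1}) I would start from a totally acyclic complex $\mathbf P$ of projective $A$-modules with $M=\Ker(P_0\to P_{-1})$ and simply restrict it. By the remarks above $\mathbf P|_R$ is an acyclic complex of projective $R$-modules with $M|_R=\Ker(P_0|_R\to P_{-1}|_R)$, so the only thing to verify is that it is totally acyclic, i.e. that $\Hom_R(\mathbf P|_R,Q)$ is acyclic for every projective $R$-module $Q$. The crux is the adjunction isomorphism $\Hom_R(\mathbf P|_R,Q)\cong\Hom_A(\mathbf P,\Hom_R(A,Q))\cong\Hom_A(\mathbf P,A\otimes_R Q)$, the second isomorphism being the defining Frobenius property. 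Since $A\otimes_R Q$ is a projective $A$-module and $\mathbf P$ is totally acyclic over $A$, the right-hand complex is acyclic; hence so is the left-hand one, and $M|_R$ is Gorenstein projective.

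For the converse I would first isolate the dual lemma: for any Frobenius extension, if $N$ is a Gorenstein projective $R$-module then $A\otimes_R N$ is a Gorenstein projective $A$-module. Indeed, applying the exact, projective-preserving functor $A\otimes_R-$ to a totally acyclic $R$-complex $\mathbf Q$ resolving $N$ produces an acyclic complex of projective $A$-modules, whose total acyclicity follows from the adjunction $\Hom_A(A\otimes_R\mathbf Q,P)\cong\Hom_R(\mathbf Q,P|_R)$ together with the fact that $P|_R$ is projective over $R$. I would also record the $\Ext$-transfer that both cases rely on: combining the same adjunction with the Frobenius isomorphism $\Hom_R(A,R)\cong A$ gives $\Ext^i_A(M,A)\cong\Ext^i_R(M|_R,R)$, so if $M|_R$ is Gorenstein projective then $\Ext^i_A(M,A)=0$ for all $i>0$.

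With these in hand the two hypotheses in Theorem \ref{thm 2.3} finish the job by different routes. In the separable case the separability datum splits the multiplication map $A\otimes_R M|_R\to M$ as a map of $A$-modules, so $M$ is a direct summand of the Gorenstein projective module $A\otimes_R M|_R$; since the class of Gorenstein projective modules is closed under direct summands, $M$ is Gorenstein projective over $A$. In the left-Gorenstein case I would instead use that $A$ is Iwanaga--Gorenstein, over which a module is Gorenstein projective precisely when $\Ext^i(-,A)$ vanishes for all $i>0$; the $\Ext$-transfer above then delivers the conclusion directly. I expect the main obstacle to be total acyclicity rather than acyclicity: exactness of the complexes is automatic from exactness of $\mathrm{Res}$ and $A\otimes_R-$, but controlling the $\Hom$-complexes genuinely needs the Frobenius isomorphism $A\otimes_R-\cong\Hom_R(A,-)$, and in the converse it is exactly the gap between the necessary condition $\Ext^{>0}_A(M,A)=0$ and an actual complete resolution of $M$ that forces the extra separable or Gorenstein hypothesis.
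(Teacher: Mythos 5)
Your forward implication and your separable case match the paper's argument (Lemma \ref{lem 2.1}, Lemma \ref{lem 2.2}(2) and Theorem \ref{thm 2.3}) essentially verbatim: restrict a totally acyclic complex and use the adjunction together with the Frobenius isomorphism $\mathrm{Hom}_R(A,-)\cong A\otimes_R-$ to test against projectives; in the separable case, split $\theta\colon A\otimes_RM\to M$ over $A$ and use closure of $\mathcal{GP}$ under direct summands. Those parts are correct.

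The left-Gorenstein case, however, has a genuine gap. First, the hypothesis is that $A$ is \emph{left-Gorenstein}, not Iwanaga--Gorenstein; the paper points out in Section 2.2 that left-Gorenstein rings need not be noetherian, so you cannot ``use that $A$ is Iwanaga--Gorenstein.'' Second, and more seriously, the criterion you invoke --- that a module is Gorenstein projective precisely when $\mathrm{Ext}^i(-,A)$ vanishes for $i>0$ --- is a statement about \emph{finitely generated} modules over Iwanaga--Gorenstein rings. For arbitrary modules it fails: already over $A=\mathbb{Z}$ the implication ``$\mathrm{Ext}^1_{\mathbb{Z}}(M,\mathbb{Z})=0\Rightarrow M$ free'' is the Whitehead problem, which is independent of ZFC. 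Since the entire point of the theorem (and of the paper, as stressed in the introduction) is to treat not necessarily finitely generated modules, this step cannot stand. The correct replacement, which is what the paper uses in Theorem \ref{thm 2.1}, is the Beligiannis--Reiten fact that over a left-Gorenstein ring $(\mathcal{GP},\mathcal{W})$ is a cotorsion pair, so $M$ is Gorenstein projective if and only if $\mathrm{Ext}^i_A(M,N)=0$ for every module $N$ of \emph{finite projective dimension} and all $i>0$. Your $\mathrm{Ext}$-transfer does yield the vanishing $\mathrm{Ext}^i_A(M,P)=0$ for all projective $A$-modules $P$ (since $P$ is a direct summand of $A\otimes_RP$, as in Lemma \ref{lem 2.2}(1)), and an induction on projective dimension then extends this to all of $\mathcal{W}$; with that substitution the argument closes.
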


We remark that $\mathbb{Z}\subset \mathbb{Z}G$ is both a left-Gorenstein and a separable Frobenius extension, so Buchweitz's observation is true for not necessarily finitely generated Gorenstein projective modules. In order to prove Theorem \ref{thm 2.1}, we need a fact that over a left-Gorenstein ring, $(\mathcal{GP}, \mathcal{W})$ is a cotorsion pair \cite{BR07}. We use $\mathcal{GP}$ to denote the class of Gorenstein projective modules, and $\mathcal{W}$ to denote the class of modules with finite projective dimension. However, we further show in Theorem \ref{thm 2.2} that the cotorsion pair $(\mathcal{GP}, \mathcal{W})$ is cogenerated by a set. This result generalizes \cite[Theorem 8.3]{Hov02} from Iwanaga-Gorenstein rings to left-Gorenstein rings. It seems to be of particular interest, since this will induce a cofibrantly generated model structure on the category of modules by applying Hovey's correspondence \cite[Theorem 2.2]{Hov02}, such that the associated homotopy category is exactly the stable category $\underline{\mathcal{GP}}$.

The second inspirational example of this paper is the ring extension $R\subset A=R[x]/(x^{2})$. One can also view $A$ as a graded ring with a copy of $R$ (generated by 1) in degree 0 and a copy of $R$ (generated by $x$) in degree 1. It is shown in Theorem \ref{thm 3.1} that:
\begin{thm**}\label{thm B} A graded $A$-module is Gorenstein projective in $\mathrm{GrMod}(A)$, if and only if its ungraded module is Gorenstein projective in $\mathrm{Mod}(A)$, if and only if its underlying module is Gorenstein projective in $\mathrm{Mod}(R)$.
\end{thm**}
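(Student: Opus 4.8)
The plan is to exploit the identification $\mathrm{GrMod}(A)\cong\mathrm{Ch}(R)$, the category of cochain complexes of $R$-modules: writing $M=\bigoplus_{n}M_{n}$, the datum of a graded $A$-module is exactly the datum of the complex $(M_{\bullet},x)$ whose differential $M_{n}\to M_{n+1}$ is multiplication by $x$, the relation $x^{2}=0$ being precisely $d^{2}=0$. Under this identification the shifted free modules $A(-d)$ become the contractible two-term complexes $R\xrightarrow{=}R$, so that the graded-projective $A$-modules correspond to the contractible complexes of projective $R$-modules (summands of direct sums of such disks); induction $A\otimes_{R}-$ sends a projective $R$-module $P$ to the disk $P\xrightarrow{=}P$, while restriction forgets the differential. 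Both the ungraded extension $R\subset A$ and its graded counterpart $\mathrm{GrMod}(R)\subset\mathrm{GrMod}(A)$ are Frobenius, with $A$ free of rank two over $R$ on either side. Finally, since $\mathcal{GP}$ is closed under arbitrary direct sums and under direct summands, the underlying $R$-module $\bigoplus_{n}M_{n}$ is Gorenstein projective if and only if each $M_{n}$ is; this is the bridge to the ``all items are Gorenstein projective'' formulation of the corollary. Throughout I write $(1),(2),(3)$ for the three conditions of the theorem in the stated order (graded over $A$, ungraded over $A$, underlying over $R$).

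Two of the implications come essentially for free. The implication $(2)\Rightarrow(3)$ is the unconditional half of Theorem A applied to $R\subset A$. For $(1)\Rightarrow(3)$ I would run the same argument one level up: restricting a graded totally acyclic complex of graded-projective $A$-modules along $\mathrm{GrMod}(A)\to\mathrm{GrMod}(R)$ yields, termwise, a totally acyclic complex exhibiting each $M_{n}$ as Gorenstein projective, the required $\mathrm{Hom}$-acyclicity being transported by the graded Frobenius adjunctions. Thus it suffices to establish the two converse implications $(3)\Rightarrow(2)$ and $(3)\Rightarrow(1)$; once these are in hand every remaining implication (in particular $(1)\Leftrightarrow(2)$) follows by composition, and the corollary on $R$-complexes is immediate from $(1)\Leftrightarrow(3)$ read through $\mathrm{GrMod}(A)\cong\mathrm{Ch}(R)$.

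The crux, and the main obstacle, is the converse direction ``underlying Gorenstein projective $\Rightarrow$ Gorenstein projective over $A$'', in both its ungraded form $(3)\Rightarrow(2)$ and its graded form $(3)\Rightarrow(1)$. The clean route is to invoke the converse half of Theorem A, but that requires the extension to be left-Gorenstein or separable, and here is the difficulty: $R\subset R[x]/(x^{2})$ is never separable (there is no separability idempotent, e.g.\ already over a field), so one cannot simply split $M$ off from the induced module $A\otimes_{R}M$. I would therefore verify that $R\subset A$ is a left-Gorenstein Frobenius extension for \emph{every} base ring $R$; unwinding that definition, this should reduce to the fact that $A$ is $R$-free with Nakayama automorphism the sign involution $\sigma\colon x\mapsto -x$, so that induction and coinduction coincide and preserve the class $\mathcal{W}$ of modules of finite projective dimension. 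Carrying out the same verification inside the graded category then disposes of $(3)\Rightarrow(1)$ at once.

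Failing a direct verification of the left-Gorenstein condition, the alternative is the explicit construction that I expect to be the real content of the hard direction. One has the two natural short exact sequences $0\to M^{\sigma}\to A\otimes_{R}M\to M\to 0$ and $0\to M\to A\otimes_{R}M\to M^{\sigma}\to 0$, where $M^{\sigma}$ is $M$ twisted by the sign involution; splicing them produces a doubly infinite exact complex with all middle terms $A\otimes_{R}M$ and cocycles alternating between $M$ and $M^{\sigma}$. Replacing $M$ by a complete $R$-resolution (available since the underlying module is Gorenstein projective) and totalizing should then yield a totally acyclic complex of \emph{projective} $A$-modules with $0$-cocycle $M$. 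The obstruction throughout is exactly that multiplication by $x$ fails to split off $M$ from $A\otimes_{R}M$ (non-separability again), which is what the disk description of graded projectives is designed to absorb; verifying that the totalization is genuinely totally acyclic, rather than merely acyclic, is the step I expect to demand the most care.
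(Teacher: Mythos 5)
Your decomposition of the problem is right: the restrictions $(1)\Rightarrow(2)\Rightarrow(3)$ are the cheap directions (the paper gets $(1)\Rightarrow(2)$ by testing against induced projectives via the Frobenius adjunction and the split surjection $A\otimes_{R}Q\rightarrow Q$, and $(2)\Rightarrow(3)$ is Lemma \ref{lem 2.1}), and the entire difficulty sits in going back up from $(3)$. But your ``clean route'' rests on a false claim. In this paper a \emph{left-Gorenstein} Frobenius extension means that the ring $A$ itself is left-Gorenstein, i.e.\ $\mathrm{Mod}(A)$ has finite global Gorenstein projective dimension; this is a homological finiteness condition on $A$, not a structural feature of the Frobenius data, and it fails for $A=R[x]/(x^{2})$ as soon as $R$ is not left-Gorenstein (take $R=k[u,v]/(u^{2},uv,v^{2})$, a local Artinian ring of infinite self-injective dimension). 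The fact that $A$ is $R$-free with a Nakayama (sign) involution, so that induction and coinduction agree, is true of \emph{every} Frobenius extension and has no bearing on whether $(\mathcal{GP},\mathcal{W})$ is a cotorsion pair over $A$. Since you also (correctly) rule out separability, neither half of Theorem A applies, and the whole point of Section 3 is that the converse must be proved by hand for arbitrary $R$.

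Your fallback --- splicing $0\rightarrow M^{\sigma}\rightarrow A\otimes_{R}M\rightarrow M\rightarrow 0$ with $0\rightarrow M\rightarrow A\otimes_{R}M\rightarrow M^{\sigma}\rightarrow 0$, replacing $M$ by a complete $R$-resolution and totalizing --- is the right idea in spirit but stops exactly where the work begins: the terms $A\otimes_{R}M$ are not projective unless $_{R}M$ is, the $x$-action must be lifted compatibly to the complete resolution, and total acyclicity of an unbounded totalization is the statement to be proved, not a formality. The paper avoids all of this by an explicit degreewise construction (Lemmas \ref{lem 3.4} and \ref{lem 3.5}): only the right half of the complete resolution needs building (the left half is free by Lemma \ref{lem 2.2}(1)); one embeds each $M^{i}$ into a projective $R$-module $G^{i}$, forms the graded projective $N$ with $N^{i}=G^{i}\oplus G^{i+1}$ and differential $\left(\begin{smallmatrix}0&1\\0&0\end{smallmatrix}\right)$, embeds $M$ by $\left(\begin{smallmatrix}f^{i}\\ f^{i+1}\delta\end{smallmatrix}\right)$, and then uses the identification $\mathrm{Hom}_{\mathrm{Gr}}(-,\overline{D}[-i])\cong\mathrm{Hom}_{R}((-)^{i},D)$ together with Schanuel's lemma to see that the cokernel is again degreewise Gorenstein projective and that the sequence stays exact under $\mathrm{Hom}_{\mathrm{Gr}}(-,P)$ for every graded projective $P=\prod_{i}\overline{P^{i}}[-i]$; iterating and splicing gives the totally acyclic complex in $\mathrm{GrMod}(A)$, and the passage from $R$-test objects to $A$-test objects is again the adjunction plus $Q\mid A\otimes_{R}Q$. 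You should either carry out this degreewise construction or supply the missing comparison-map and total-acyclicity arguments for your totalization; as written, the converse implications are not established.
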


For the graded ring $A=R[x]/(x^{2})$, there is an observation that the category $\mathrm{GrMod}(A)$ is automatically isomorphic to the category $\mathrm{Ch}(R)$ of $R$-complexes; see for example \cite{GH10}. So a Gorenstein projective graded $A$-module is precisely the Gorenstein projective $R$-complex introduced by Enochs and Garc\'{i}a Rozas \cite{EGR98}. It is immediate that (Corollary \ref{cor 3.1}): an $R$-complex is Gorenstein projective if and only if all its items are Gorenstein projective $R$-modules; see also \cite[Theorem 1]{Yang11}. This generalizes \cite[Theorem 4.5]{EGR98} and \cite[Theorem 3.1]{LZ11} by removing the conditions that the base ring $R$ is Iwanaga-Gorenstein and is right coherent and left perfect, respectively.

The paper is organized as follows. In Section 2, we introduce the notion of left-Gorenstein Frobenius extensions, and it is shown that over left-Gorenstein rings, $(\mathcal{GP}, \mathcal{W})$ is a cotorsion pair cogenerated by a set. We study the separable Frobenius extensions. Then, Theorem A is proved. In Section 3, we focus on Gorenstein projective graded $R[x]/(x^{2})$-modules, and we prove the result in Theorem B.

\section{\bf Gorenstein projective modules over Frobenius extensions}
Throughout, all rings are associative with a unit. Homomorphisms of rings are required to send the unit to the unit. Let $R$ be a ring.
A left $R$-module $M$ is sometimes written as $_{R}M$. For two left $R$-modules $M$ and $N$, denote by $\mathrm{Hom}_{R}(M, N)$ the abelian
group consisting of left $R$-homomorphisms between them. A right $R$-module $M$ is sometimes written as $M_{R}$. We identify right $R$-modules
with left $R^{op}$-modules, where $R^{op}$ is the opposite ring of $R$. For two right $R$-modules $M$ and $N$, the abelian group of right $R$-homomorphisms is denoted by $\mathrm{Hom}_{R^{op}}(M, N)$. We denote by $\mathrm{Mod}(R)$ the category of left $R$-modules, and
$\mathrm{Mod}(R^{op})$ the category of right $R$-modules. Let $S$ be another ring. An $R$-$S$-bimodule $M$ is written as $_{R}M_{S}$.

We always denote a ring extension $\iota: R\hookrightarrow A$ by $R\subset A$. The natural bimodule $_{R}A_{R}$ is given by
$rar^{'}:= \iota(r)\cdot a\cdot \iota(r^{'})$. Similarly, we consider $_{R}A$ and $_{R}A_{A}$ etc. For a ring extension $R\subset A$, there is a restricted functor $Res: \mathrm{Mod}(A)\rightarrow \mathrm{Mod}(R)$ sends $_{A}M$ to $_{R}M$, given by $rm:=\iota(r)m$. The structure map $\iota$ is usually suppressed. In the opposite direction, there are functors $T = A\otimes_{R}-: \mathrm{Mod}(R)\rightarrow \mathrm{Mod}(A)$ and
$H= \mathrm{Hom}_{R}(A, -): \mathrm{Mod}(R)\rightarrow \mathrm{Mod}(A)$. It is clear that $(T, Res)$ and $(Res, H)$ are adjoint pairs.

\medskip
\noindent{\bf 2.1 Frobenius extensions}
\smallskip

We refer to \cite[Definition 1.1, Theorem 1.2]{Kad99} for the definition of Frobenius extensions.

\begin{df}\label{def 1} A ring extension $R\subset A$ is a Frobenius extension, provided that one of the following equivalent conditions holds:\\
\indent $(1)$ The functors $T = A\otimes_{R}-$ and $H= \mathrm{Hom}_{R}(A, -)$ are naturally equivalent.\\
\indent $(2)$ $_{R}A$ is finite generated projective and $_{A}A_{R}\cong (_{R}A_{A})^{*}= \mathrm{Hom}_{R}(_{R}A_{A}, R)$.\\
\indent $(3)$ $A_{R}$ is finite generated projective and $_{R}A_{A}\cong (_{A}A_{R})^{*}= \mathrm{Hom}_{R^{op}}(_{A}A_{R}, R)$.\\
\indent $(4)$ There exists an $R$-$R$-homomorphism $\tau: A\rightarrow R$ and elements $x_i$, $y_i$ in $A$, such that for any $a\in A$, one has
$\sum\limits_{i}x_i\tau(y_ia) = a$ and $\sum\limits_{i}\tau(ax_i)y_i = a$.
\end{df}

\begin{lem}\label{lem 2.1} Let $R\subset A$ be a Frobenius extension of rings, $M$ a left $A$-module. If $_{A}M$ is Gorenstein projective, then the underlying left $R$-module $_{R}M$ is also Gorenstein projective.
\end{lem}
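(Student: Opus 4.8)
The plan is to take a complete projective resolution witnessing that $_{A}M$ is Gorenstein projective and show that restricting it along $R\subset A$ produces a complete projective resolution over $R$. Concretely, choose a totally acyclic complex of projective $A$-modules $\mathbf{P}: \cdots \to P_{1}\to P_{0}\to P_{-1}\to \cdots$ with $M=\mathrm{Ker}(P_{0}\to P_{-1})$, and consider $Res(\mathbf{P})$, the same complex viewed over $R$. Since restriction is exact on the underlying abelian groups, $Res(\mathbf{P})$ is acyclic and $_{R}M=\mathrm{Ker}(Res(P_{0})\to Res(P_{-1}))$, so it only remains to check that $Res(\mathbf{P})$ is a \emph{totally} acyclic complex of projective $R$-modules.

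First I would record two consequences of the Frobenius hypothesis. By Definition \ref{def 1}$(2)$, $_{R}A$ is finitely generated projective; hence $Res$ carries projective $A$-modules to projective $R$-modules (a free $A$-module restricts to a direct sum of copies of $_{R}A$, and direct summands of projectives are projective). This shows every $Res(P_{i})$ is projective over $R$. Next, by Definition \ref{def 1}$(1)$ the functors $T=A\otimes_{R}-$ and $H=\mathrm{Hom}_{R}(A,-)$ are naturally isomorphic; since $T$ sends projective $R$-modules to projective $A$-modules, so does $H$.

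The heart of the argument is the verification of total acyclicity, that is, that $\mathrm{Hom}_{R}(Res(\mathbf{P}),Q)$ is acyclic for every projective $R$-module $Q$. Here I would invoke the adjunction $(Res,H)$, which yields a natural isomorphism of complexes
\[
\mathrm{Hom}_{R}(Res(\mathbf{P}),Q)\cong \mathrm{Hom}_{A}(\mathbf{P},H(Q)).
\]
Because $H(Q)\cong T(Q)=A\otimes_{R}Q$ is a projective $A$-module by the previous paragraph, and $\mathbf{P}$ is totally acyclic over $A$, the right-hand complex is acyclic; hence so is the left-hand one. Combined with the first step, this shows $Res(\mathbf{P})$ is totally acyclic, and therefore $_{R}M$ is Gorenstein projective.

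The step I expect to require the most care is the adjunction isomorphism displayed above: I must ensure it is an isomorphism of complexes (natural in the degree variable, so that it is compatible with the differentials of $\mathbf{P}$) rather than merely a degreewise isomorphism, and that the identification $H(Q)\cong T(Q)$ supplied by the Frobenius property is natural enough to be applied uniformly across the complex. Once this naturality is in hand, the reduction of $R$-total-acyclicity to the $A$-total-acyclicity of $\mathbf{P}$ is immediate.
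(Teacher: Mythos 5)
Your proposal is correct and follows essentially the same route as the paper: restrict the totally acyclic complex $\mathbf{P}$, note that $Res$ preserves projectivity since $_{R}A$ is finitely generated projective, and use the adjunction isomorphism $\mathrm{Hom}_{R}(\mathbf{P},Q)\cong \mathrm{Hom}_{A}(\mathbf{P},\mathrm{Hom}_{R}(A,Q))$ together with the projectivity of $\mathrm{Hom}_{R}(A,Q)\cong A\otimes_{R}Q$ to verify total acyclicity over $R$. The naturality you flag as the delicate point is indeed supplied by the standard tensor-hom adjunction and the natural equivalence $T\cong H$, exactly as in the paper's argument.
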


\begin{proof}
Let $M$ be a Gorenstein projective left $A$-module. There exists a totally acyclic complex, i.e. an acyclic complex of projective $A$-modules
$\mathbf{P}:=\cdots \rightarrow P_{1}\rightarrow P_{0}\rightarrow P_{-1}\rightarrow \cdots$ with $\mathrm{Hom}_{A}(\mathbf{P}, P)$ being an
acyclic complex for each projective $A$-module $P$, such that $M = \mathrm{Ker}(P_{0}\rightarrow P_{-1})$. Note that each $P_i$ is a projective
left $R$-module. Then by restricting $\mathbf{P}$ one gets an acyclic complex of projective $R$-modules.

Let $Q$ be a projective left $R$-module. It follows from isomorphisms $\mathrm{Hom}_{R}(A, Q)\cong A\otimes_{R}Q$ that $\mathrm{Hom}_{R}(A, Q)$ is a projective left $A$-modules. Then the complex $\mathrm{Hom}_{A}(\mathbf{P}, \mathrm{Hom}_{R}(A, Q))$ is acyclic. Moreover, there are isomorphisms $\mathrm{Hom}_{R}(\mathbf{P}, Q)\cong\mathrm{Hom}_{R}(A\otimes_{A}\mathbf{P}, Q)\cong \mathrm{Hom}_{A}(\mathbf{P}, \mathrm{Hom}_{R}(A, Q))$. This implies that the complex $\mathrm{Hom}_{R}(\mathbf{P}, Q)$ is acyclic, and hence the underlying $R$-module $M$ is Gorenstein projective.
\end{proof}

\begin{lem}\label{lem 2.2} Let $R\subset A$ be a Frobenius extension of rings, $M$ a left $A$-module. If the underlying module $_{R}M$ is Gorenstein projective, then the following hold:\\
\indent $(1)$ For any projective $A$-module $P$ and any $i>0$, $\mathrm{Ext}^{i}_{A}(M, P)=0$.\\
\indent $(2)$ $A\otimes_{R}M$ is a Gorenstein projective left $A$-module.
\end{lem}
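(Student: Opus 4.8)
The plan is to exploit the full Frobenius toolkit provided by Definition \ref{def 1}: the adjoint pairs $(T,Res)$ and $(Res,H)$, the natural equivalence $T\cong H$, and the fact that $_RA$ and $A_R$ are finitely generated projective. I would first record three structural consequences used throughout. Since $A_R$ is projective, hence flat, $T=A\otimes_R-$ is exact, and it preserves projectives because $T(R^{(I)})\cong A^{(I)}$ shows every induced free module is $A$-free. Since $_RA$ is projective, $Res$ is exact and carries projective $A$-modules to projective $R$-modules (a summand of $A^{(I)}$ restricts to a summand of the projective $R$-module $_RA^{(I)}$), and moreover $H=\mathrm{Hom}_R(A,-)$ is exact.

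For part $(1)$ I would reduce to free targets. Any projective $A$-module $P$ is a direct summand of $A^{(I)}=A\otimes_R R^{(I)}=T(R^{(I)})$, and $\mathrm{Ext}^i_A(M,-)$ respects direct summands, so it suffices to prove $\mathrm{Ext}^i_A(M,T(R^{(I)}))=0$ for $i>0$. Using $T\cong H$ I would rewrite $T(R^{(I)})\cong H(R^{(I)})$ and then promote the $(Res,H)$ adjunction $\mathrm{Hom}_A(-,H(Y))\cong\mathrm{Hom}_R(Res(-),Y)$ to the derived level. Concretely, taking a projective resolution $\mathbf{P}_\bullet\rightarrow M$ over $A$, its restriction $Res(\mathbf{P}_\bullet)$ is a projective resolution of $_RM$ over $R$ (this is exactly where exactness of $Res$ and its preservation of projectivity are needed), and the adjunction identifies the two $\mathrm{Hom}$-complexes, giving $\mathrm{Ext}^i_A(M,H(Y))\cong\mathrm{Ext}^i_R({}_RM,Y)$ for every $R$-module $Y$. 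Taking $Y=R^{(I)}$ and using that a Gorenstein projective $R$-module has vanishing higher $\mathrm{Ext}$ into any projective $R$-module yields the claim.

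For part $(2)$ I would apply $T$ to a totally acyclic complex of projective $R$-modules $\mathbf{Q}:=\cdots\rightarrow Q_1\rightarrow Q_0\rightarrow Q_{-1}\rightarrow\cdots$ with $_RM=\mathrm{Ker}(Q_0\rightarrow Q_{-1})$, afforded by the hypothesis. Since $T$ is exact and preserves projectives, $T(\mathbf{Q})$ is an acyclic complex of projective $A$-modules with $A\otimes_R M=\mathrm{Ker}(T(Q_0)\rightarrow T(Q_{-1}))$. It then remains to verify total acyclicity, that is, that $\mathrm{Hom}_A(T(\mathbf{Q}),P)$ is acyclic for every projective $A$-module $P$. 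Here the $(T,Res)$ adjunction gives $\mathrm{Hom}_A(T(\mathbf{Q}),P)\cong\mathrm{Hom}_R(\mathbf{Q},Res(P))$, and as $Res(P)$ is a projective $R$-module, this complex is acyclic precisely because $\mathbf{Q}$ is totally acyclic. Hence $A\otimes_R M$ is Gorenstein projective over $A$.

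The main obstacle is the $\mathrm{Ext}$-level statement underlying $(1)$: the bare $\mathrm{Hom}$-adjunction must be lifted to an isomorphism of $\mathrm{Ext}$ groups, and this is exactly where the Frobenius hypotheses do the work. Exactness of $H$ guarantees that no higher derived functors of $H$ intrude, while $Res$ sending $A$-projectives to $R$-projectives is what keeps $Res(\mathbf{P}_\bullet)$ a genuine projective resolution computing $\mathrm{Ext}_R$. The equivalence $T\cong H$ is likewise indispensable, since it is what places the free modules $A^{(I)}$ of the statement inside the image of the coinduction functor $H$, to which the $(Res,H)$ adjunction applies; without it one would be computing $\mathrm{Ext}$ into $\mathrm{Hom}_R(A,R^{(I)})$ rather than into an honest projective $A$-module.
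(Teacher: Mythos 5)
Your proof is correct and follows essentially the same route as the paper's: part (1) reduces to computing $\mathrm{Ext}^{i}_{A}(M, A\otimes_{R}N)$ for a projective $R$-module $N$ by combining $T\cong H$ with the $(Res,H)$-adjunction applied to an $A$-projective resolution that restricts to an $R$-projective resolution, and part (2) applies $T$ to a totally acyclic complex of projective $R$-modules and verifies total acyclicity via the $(T,Res)$-adjunction. The only cosmetic difference is that you realize the projective $A$-module $P$ as a direct summand of the free module $A^{(I)}=T(R^{(I)})$, whereas the paper realizes $P$ as a direct summand of $A\otimes_{R}P$ via the split counit $\theta$; the two reductions are interchangeable.
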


\begin{proof}
(1) For any left $A$-module $M$ and any left $R$-module $N$, there are isomorphisms
$$\mathrm{Hom}_{A}(M, A\otimes_{R}N)\cong \mathrm{Hom}_{A}(M, \mathrm{Hom}_{R}(A, N))\cong\mathrm{Hom}_{R}(A\otimes_{A}M, N)\cong \mathrm{Hom}_{R}(M, N).$$
Moreover, by replacing $_{A}M$ with an $A$-projective resolution $\mathbb{P}^{\bullet}$ of $M$ and observing that $\mathbb{P}^{\bullet}$ is also an $R$-projective resolution of $_{R}M$, we have an isomorphism of cohomology $\mathrm{Ext}_{A}^{i}(M, A\otimes_{R}N)\cong \mathrm{Ext}_{R}^{i}(M, N)$ for any $i>0$.

Let $P$ be a projective left $A$-module. There is a split epimorphism $\theta: A\otimes_{R}P\rightarrow P$ of $A$-modules given by $\theta(a\otimes_{R}x)=ax$ for any $a\in A$ and $x\in P$, and then $P$ is a direct summand of $A\otimes_{R}P$. Since $P$ is projective as a left $R$-module, and $_{R}M$ is Gorenstein projective by assumption, we have $\mathrm{Ext}_{A}^{i}(M, A\otimes_{R}P)\cong \mathrm{Ext}_{R}^{i}(M, P)=0$, and then $\mathrm{Ext}^{i}_{A}(M, P)=0$ as desired.

(2) Let $\mathbf{P}:=\cdots \rightarrow P_{1}\rightarrow P_{0}\rightarrow P_{-1}\rightarrow \cdots$ be a totally acyclic complex of projective $R$-modules such that $_{R}M = \mathrm{Ker}(P_{0}\rightarrow P_{-1})$. It is easy to see that $A\otimes_{R}\mathbf{P}$ is an acyclic complex of projective $A$-modules, and $A\otimes_{R}M = \mathrm{Ker}(A\otimes_{R}P_{0}\rightarrow A\otimes_{R}P_{-1})$. Moreover, for any projective $A$-module $P$, the complex $\mathrm{Hom}_{A}(A\otimes_{R}\mathbf{P}, P)\cong\mathrm{Hom}_{R}(\mathbf{P}, P)$ is acyclic. So $A\otimes_{R}M$
is a Gorenstein projective left $A$-module.
\end{proof}

\medskip
\noindent{\bf 2.2~~~~ Left-Gorenstein Frobenius extensions}
\medskip

Following \cite[Theorem VII2.5]{BR07}, a ring $\Lambda$ is called left-Gorenstein provided the category $\mathrm{Mod}(\Lambda)$ of left $\Lambda$-modules is a Gorenstein category. This is equivalent to the condition that the global Gorenstein projective dimension of $\Lambda$ is finite. By \cite[Theorem 10.2.14]{EJ00}, each Iwanaga-Gorenstein ring (i.e. two-sided noetherian ring with left and right self-injective dimension) is left-Gorenstein. The converse is not true in general. For example, let $S_{n} = S[x_{1}, x_{2},\cdots, x_{n}]$ be the polynomial ring in $n$ indeterminates over a non-noetherian hereditary ring $S$. Let $R_{i} = S_{i-1} \otimes S_{i-1}$ be the trivial extension of $S_{i-1}$ by $S_{i-1}$
for $i \geq 1$ (set $S_{0} = S$). Then $R_{i}$ is a left-Gorenstein ring for every $i \geq 1$, whereas $R_{i}$ is non-noetherian, and hence is not Iwanaga-Gorenstein.

\begin{df}\label{def 2} Let $R\subset A$ be a Frobenius extension. Then $R\subset A$ is called a left-Gorenstein Frobenius extension  provided in addition that $A$ is left-Gorenstein.
\end{df}

\begin{thm}\label{thm 2.1} Let $R\subset A$ be a left-Gorenstein Frobenius extension of rings, $M$ a left $A$-module. Then $M$ is a Gorenstein projective left $A$-module if and only if the underlying left $R$-module $M$ is Gorenstein projective.
\end{thm}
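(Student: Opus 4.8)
The forward implication is already contained in Lemma~\ref{lem 2.1}, which uses nothing beyond $R\subset A$ being Frobenius; so the entire content lies in the converse, and this is where left-Gorensteinness of $A$ enters. The plan is to detect Gorenstein projectivity of ${}_{A}M$ cohomologically rather than by exhibiting a totally acyclic complex. Since $A$ is left-Gorenstein, $(\mathcal{GP},\mathcal{W})$ is a cotorsion pair in $\mathrm{Mod}(A)$ by \cite{BR07}; in particular $\mathcal{GP}={}^{\perp}\mathcal{W}$, so it suffices to prove that $\mathrm{Ext}^{1}_{A}(M,W)=0$ for every $A$-module $W$ of finite projective dimension.

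First I would record the vanishing against projectives: assuming ${}_{R}M$ is Gorenstein projective, Lemma~\ref{lem 2.2}(1) gives $\mathrm{Ext}^{i}_{A}(M,P)=0$ for every projective $A$-module $P$ and every $i>0$. Next I would bootstrap this to all of $\mathcal{W}$ by induction on $\mathrm{pd}_{A}W$. For $\mathrm{pd}_{A}W=n>0$ I choose a short exact sequence $0\to K\to P_{0}\to W\to 0$ with $P_{0}$ projective and $\mathrm{pd}_{A}K=n-1$; the associated long exact sequence, together with $\mathrm{Ext}^{i}_{A}(M,P_{0})=0$ for $i>0$, yields $\mathrm{Ext}^{i}_{A}(M,W)\cong\mathrm{Ext}^{i+1}_{A}(M,K)$ for all $i\geq 1$, and the inductive hypothesis kills the right-hand side. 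Hence $\mathrm{Ext}^{i}_{A}(M,W)=0$ for all $i>0$ and all $W\in\mathcal{W}$; in particular $\mathrm{Ext}^{1}_{A}(M,W)=0$, so $M\in{}^{\perp}\mathcal{W}=\mathcal{GP}$ and ${}_{A}M$ is Gorenstein projective.

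The two Ext computations are routine dimension shifting; the substantive ingredient is the cotorsion-pair identity $\mathcal{GP}={}^{\perp}\mathcal{W}$, and this is exactly where I expect the crux to sit. Over an arbitrary ring the class ${}^{\perp}\mathcal{W}$ of modules that are $\mathrm{Ext}^{1}$-orthogonal to everything of finite projective dimension can be strictly larger than $\mathcal{GP}$, so the implication ``$\mathrm{Ext}^{1}_{A}(M,-)$ vanishes on $\mathcal{W}\Rightarrow M$ Gorenstein projective'' genuinely relies on $A$ being left-Gorenstein and cannot be extracted from the Frobenius hypothesis alone. I would therefore quote \cite{BR07} in the precise form that $(\mathcal{GP},\mathcal{W})$ is a cotorsion pair with $\mathcal{GP}={}^{\perp}\mathcal{W}$. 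Note that Lemma~\ref{lem 2.2}(2) is not used here; it is the ingredient reserved for the separable case, so in this theorem the left-Gorenstein hypothesis is exploited solely through the cotorsion pair.
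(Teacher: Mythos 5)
Your proposal is correct and follows essentially the same route as the paper: the forward direction is delegated to Lemma~\ref{lem 2.1}, and the converse is obtained by combining the left-Gorenstein characterization $\mathcal{GP}={}^{\perp}\mathcal{W}$ from \cite{BR07} with Lemma~\ref{lem 2.2}(1) and dimension shifting along $0\to K\to P\to W\to 0$. The only cosmetic difference is that the paper phrases the criterion as vanishing of $\mathrm{Ext}^{i}_{A}(M,-)$ on $\mathcal{W}$ for all $i>0$ rather than just $i=1$, which your induction establishes anyway.
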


\begin{proof}
By Lemma \ref{lem 2.1}, it suffices to prove that when the underlying module $_{R}M$ is Gorenstein projective, $M$ is a Gorenstein projective left $A$-module.

Note that over a left-Gorenstein ring $A$, a module $M$ is Gorenstein projective if and only if $\mathrm{Ext}^{i}_{A}(M, N)=0$ for any module $N$ of finite projective dimension and any $i>0$; see \cite{BR07} or Theorem \ref{thm 2.2} below. Assume that $N$ is an $A$-module with projective dimension $n$. Then there is an exact sequence $0\rightarrow K\rightarrow P\rightarrow N\rightarrow 0$ of $A$-modules, where $P$ is projective and $K$ is of projective dimension $n-1$. By induction on the projective dimension of modules, it is deduced from Lemma \ref{lem 2.2}(1) that $\mathrm{Ext}^{i}_{A}(M, N)\cong \mathrm{Ext}^{i+1}_{A}(M, K)=0$. The assertion follows.
\end{proof}

For a finite group $G$, it is easy to see that the integral group ring $\mathbb{Z}G$ is Iwanaga-Gorenstein, since there is an exact sequence
$0\rightarrow \mathbb{Z}G\rightarrow \mathbb{Q}G\rightarrow \mathbb{Q}/\mathbb{Z}G\rightarrow 0$ of left or right $\mathbb{Z}G$-modules, where
$\mathbb{Q}G = \mathrm{Hom}_{\mathbb{Z}}(\mathbb{Z}G, \mathbb{Q})$ is an injective $\mathbb{Z}G$-module, and similarly $\mathbb{Q}/\mathbb{Z}G$ is injective.

\begin{cor}\label{cor 2.1} Let $G$ be a finite group, $M$ a left $\mathbb{Z}G$-module. Then $M$ is a Gorenstein projective left $\mathbb{Z}G$-module if and only if the underlying left $\mathbb{Z}$-module $M$ is Gorenstein projective.
\end{cor}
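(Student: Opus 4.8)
The plan is to read Corollary \ref{cor 2.1} as the special case of Theorem \ref{thm 2.1} for the extension $\mathbb{Z}\subset\mathbb{Z}G$, so that the whole task reduces to checking the two hypotheses of that theorem: that $\mathbb{Z}\subset\mathbb{Z}G$ is a Frobenius extension and that $\mathbb{Z}G$ is left-Gorenstein. Once both are in place, Theorem \ref{thm 2.1} applies verbatim and delivers the asserted equivalence.

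First I would verify the Frobenius condition by producing the data of criterion $(4)$ in Definition \ref{def 1}. Take $\tau:\mathbb{Z}G\to\mathbb{Z}$ to be the trace map sending $\sum_{h\in G}a_h h$ to the identity-coefficient $a_e$; this is a $\mathbb{Z}$-$\mathbb{Z}$-bimodule homomorphism. Using the elements $x_g = g$ and $y_g = g^{-1}$ indexed by $g\in G$, a direct computation shows that $\tau(g^{-1}a)$ extracts precisely the $g$-coefficient of $a$, whence $\sum_{g\in G} g\,\tau(g^{-1}a)=a$, and symmetrically $\sum_{g\in G}\tau(ag)\,g^{-1}=a$, for every $a\in\mathbb{Z}G$. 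This exhibits $\mathbb{Z}\subset\mathbb{Z}G$ as a Frobenius extension (here $\{g^{-1}\}$ is the basis dual to $\{g\}$ for the pairing $\langle a,b\rangle=\tau(ab)$).

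Next I would invoke the observation recorded immediately before the corollary: the exact sequence $0\to\mathbb{Z}G\to\mathbb{Q}G\to\mathbb{Q}/\mathbb{Z}G\to 0$ of $\mathbb{Z}G$-modules, with $\mathbb{Q}G=\mathrm{Hom}_{\mathbb{Z}}(\mathbb{Z}G,\mathbb{Q})$ and $\mathbb{Q}/\mathbb{Z}G$ injective on either side, shows that $\mathbb{Z}G$ is Iwanaga-Gorenstein of self-injective dimension at most one. By \cite[Theorem 10.2.14]{EJ00} every Iwanaga-Gorenstein ring is left-Gorenstein, so $\mathbb{Z}\subset\mathbb{Z}G$ is in fact a left-Gorenstein Frobenius extension in the sense of Definition \ref{def 2}.

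With both hypotheses confirmed, Theorem \ref{thm 2.1} applies to $\mathbb{Z}\subset\mathbb{Z}G$ and yields the stated equivalence. I do not anticipate a genuine obstacle: the result is a straight specialization, and its only content is the two verifications above, the Frobenius structure being the classical trace pairing on a group algebra and the Gorenstein property following at once from the displayed exact sequence. This also recovers Buchweitz's observation for not necessarily finitely generated modules, since over $\mathbb{Z}$ the Gorenstein projective modules are exactly the (projective, hence free) $\mathbb{Z}$-modules.
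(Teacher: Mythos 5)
Your proposal is correct and follows the same route as the paper: the paper likewise treats Corollary \ref{cor 2.1} as the specialization of Theorem \ref{thm 2.1} to $\mathbb{Z}\subset\mathbb{Z}G$, justifying the left-Gorenstein hypothesis by the very exact sequence $0\to\mathbb{Z}G\to\mathbb{Q}G\to\mathbb{Q}/\mathbb{Z}G\to 0$ you cite and taking the Frobenius property of the group ring extension as classical. Your explicit verification of condition $(4)$ of Definition \ref{def 1} via the trace map is a correct filling-in of that classical fact (the paper could equally well have routed through Theorem \ref{thm 2.3}, since Example \ref{exa 2.1}(1) shows the extension is also separable, but that is not the path it takes here).
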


Recall that a pair of classes $(\mathcal{X}, \mathcal{Y})$ of modules is a cotorsion pair provided that $\mathcal{X} =  {^\perp}\mathcal{Y}$ and $\mathcal{Y} = \mathcal{X}^{\perp}$, where $^{\perp}\mathcal{Y} = \{X \mid \mathrm{Ext}^{1}_{}(X, Y) = 0,~~\forall~~Y\in \mathcal{Y}\}$ and $\mathcal{X}^{\perp} = \{Y \mid \mathrm{Ext}^{1}_{}(X, Y) = 0,~~\forall~~X\in \mathcal{X}\}$. The cotorsion pair $(\mathcal{X}, \mathcal{Y})$ is said to be cogenerated by a set $\mathcal{S}$ if $\mathcal{S}{^{\perp}} = \mathcal{Y}$. Over an Iwanaga-Gorenstein ring $A$, it follows from \cite[Theorem 8.3]{Hov02} that $(\mathcal{GP}, \mathcal{W})$ is a cotorsion pair cogenerated by a set, where $\mathcal{GP}$ is the class of Gorenstein projective modules, and $\mathcal{W}$ is the class of modules with finite projective dimension.

It follows from \cite{BR07} that over a left-Gorenstein ring, $(\mathcal{GP}, \mathcal{W})$ is a cotorsion pair. We have more in the next result, which also generalizes \cite[Theorem 8.3]{Hov02} from Iwanaga-Gorenstein rings to left-Gorenstein rings. It seems to be of particular interest, since by Hovey's correspondence \cite[Theorem 2.2]{Hov02} between cotorsion pairs and model structures, we get a cofibrantly generated Gorenstein projective model structure on the category of modules. Moreover, the homotopy category associated with the model structure is exactly the stable category $\underline{\mathcal{GP}}$.

\begin{thm}\label{thm 2.2}
Let $A$ be a left-Gorenstein ring. The cotorsion pair $(\mathcal{GP}, \mathcal{W})$ is cogenerated by a set.
\end{thm}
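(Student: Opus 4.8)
The plan is to realize $(\mathcal{GP},\mathcal{W})$ as the cotorsion pair cogenerated by a set through the Eklof--Trlifaj machinery, for which it suffices to prove that the class $\mathcal{GP}$ is deconstructible. Since $A$ is left-Gorenstein, its global Gorenstein projective dimension is bounded by some integer $n$, and by \cite{BR07} (see also the characterization used in Theorem \ref{thm 2.1}) a module $M$ lies in $\mathcal{GP}$ precisely when $\mathrm{Ext}^{i}_{A}(M,W)=0$ for every $W\in\mathcal{W}$ and every $i>0$. Fix a cardinal $\kappa\geq|A|+\aleph_{0}$ and let $\mathcal{S}$ be a set of representatives for the isomorphism classes of Gorenstein projective modules of cardinality at most $\kappa$. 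Since every projective module is Gorenstein projective, $A$ itself lies in $\mathcal{S}$, which will guarantee that the resulting cotorsion pair has enough projectives; moreover $\mathcal{S}\subseteq\mathcal{GP}$, so $\mathcal{W}=\mathcal{GP}^{\perp}\subseteq\mathcal{S}^{\perp}$. The theorem thus reduces to the reverse inclusion $\mathcal{S}^{\perp}\subseteq\mathcal{W}$.

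To obtain this I would show that $\mathcal{GP}$ is filtered by $\mathcal{S}$, that is, every $M\in\mathcal{GP}$ admits a continuous increasing chain $0=M_{0}\subseteq M_{1}\subseteq\cdots\subseteq M_{\lambda}=M$ whose consecutive quotients $M_{\alpha+1}/M_{\alpha}$ all belong to $\mathcal{S}$. Granting this, Eklof's lemma yields $\mathrm{Ext}^{1}_{A}(M,N)=0$ for every $N\in\mathcal{S}^{\perp}$, hence $\mathcal{GP}\subseteq{}^{\perp}(\mathcal{S}^{\perp})$ and so $\mathcal{S}^{\perp}\subseteq\mathcal{GP}^{\perp}=\mathcal{W}$. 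Combined with the inclusion $\mathcal{W}\subseteq\mathcal{S}^{\perp}$ noted above, this gives $\mathcal{S}^{\perp}=\mathcal{W}$, which is exactly the statement that $(\mathcal{GP},\mathcal{W})$ is cogenerated by $\mathcal{S}$.

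The heart of the matter, and the step I expect to be the main obstacle, is the Kaplansky--Hill type sublemma producing the filtration: given $M\in\mathcal{GP}$ and a subset $X\subseteq M$ with $|X|\leq\kappa$, there is a submodule $N$ with $X\subseteq N\subseteq M$, $|N|\leq\kappa$, and both $N\in\mathcal{GP}$ and $M/N\in\mathcal{GP}$. I would build $N$ by a countable closing-off process inside a totally acyclic complex $\mathbf{P}$ witnessing $M=\mathrm{Ker}(P_{0}\rightarrow P_{-1})$: starting from $X$ one successively enlarges a $\kappa$-generated subcomplex of $\mathbf{P}$ so that it becomes closed under the differentials and contains, at each stage, the preimages and splitting data needed to force the relevant $\mathrm{Ext}$-groups to vanish. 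The finiteness of $n$ ensures that the coresolution direction of $\mathbf{P}$ stabilizes after boundedly many steps, so the resulting $\kappa$-generated subcomplex is again totally acyclic and simultaneously exhibits $N$ and $M/N$ as Gorenstein projective. Applying this sublemma to the successive quotients $M/M_{\alpha}$ (which remain in $\mathcal{GP}$ throughout) and taking unions at limit ordinals then delivers the required $\mathcal{S}$-filtration.

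The genuinely delicate point is the cardinality bookkeeping combined with guaranteeing that the quotient, and not merely the submodule, stays Gorenstein projective; closure of $\mathcal{GP}$ under $\kappa$-small subobjects is comparatively routine, whereas controlling $M/N$ is what forces the careful interleaving of the closing-off steps with the coresolution of $\mathbf{P}$. Once this sublemma is established, every subsequent deduction — Eklof's lemma, the orthogonality computations, and the identification $\mathcal{S}^{\perp}=\mathcal{W}$ — is formal, and the generalization from Iwanaga-Gorenstein to left-Gorenstein rings is absorbed entirely into the finiteness of the bound $n$.
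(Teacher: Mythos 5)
Your proposal follows essentially the same route as the paper: a Kaplansky-type closing-off (zig-zag) argument inside an acyclic complex of projectives produces, around any $\kappa$-small subset, a $\kappa$-small Gorenstein projective submodule with Gorenstein projective quotient; deconstructibility of $\mathcal{GP}$ then follows by transfinite induction, and the cogenerating set is a representative set of small Gorenstein projectives. The only stylistic difference is that the paper dispenses with tracking total acyclicity by invoking upfront that over a left-Gorenstein ring a module is Gorenstein projective if and only if it is a syzygy of an acyclic complex of projectives, whereas you propose to carry the $\mathrm{Hom}$-exactness data through the closing-off process; both amount to the same use of left-Gorensteinness.
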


\begin{proof}
Note that over a left-Gorenstein ring, a module is Gorenstein projective if and only if it is a syzygy of an acyclic complex of projectives.
We denote by $\mathrm{ac}\widetilde{\mathcal{P}}(A)$ the class of all acyclic complexes of projective $A$-modules. For a module $M$, we use $\underline{M}$ to denote the complex with $M$ concentrated in degree zero. The cardinal of a complex $\mathbf{C}:= \cdots \rightarrow C_{i+1}\rightarrow C_{i}\rightarrow C_{i-1}\rightarrow \cdots$ is defined to be $|\mathbf{C}| = |\bigoplus_{i\in \mathbb{Z}}C_{i}|$.

\medskip
\noindent{\bf Claim 1.} Let $\aleph > |A|+\aleph_{0}$ be an infinite cardinal, $\mathbf{P}:= \cdots \rightarrow P_{1}\stackrel{\partial_{1}}\rightarrow P_{0}\stackrel{\partial_{0}}\rightarrow P_{-1}\stackrel{\partial_{-1}}\rightarrow \cdots$ be
a complex in $\mathrm{ac}\widetilde{\mathcal{P}}(A)$. Let $\mathbf{C} = \underline{M}$ be a subcomplex of $\mathbf{P}$, where $M\leq P_{0}$ is a submodule with $|M|\leq \aleph$. There exists a subcomplex $\mathbf{D}\in \mathrm{ac}\widetilde{\mathcal{P}}(A)$, such that $|\mathbf{D}|\leq \aleph$, $\mathbf{C}\leq \mathbf{D}$ and $\mathbf{D}/\mathbf{C}\in \mathrm{ac}\widetilde{\mathcal{P}}(A)$.

It follows from the Kaplansky theorem that every projective module is a direct sum of countably generated projective modules.
Then $P_{n}=\bigoplus_{i\in I_{n}}P_{n,i}$ with each $P_{n,i}$ countably generated.
Let $S^{1}_{0}=\bigoplus_{i\in J_{0}}P_{0,i}$, where $J_{0}=\{i\in I_{0}| M\cap P_{0,i}\neq 0\}$. Then $M\leq S^{1}_{0}$,
$|S^{1}_{0}|\leq\aleph$, $S^{1}_{0}$ and $P_{0}/S^{1}_{0}$ are projective modules. We can now consider the acyclic complex
$$\xymatrix@C=15pt{
  \cdots   \ar[r]^{}& L^{1}_{4}\ar[r]^{\partial_{4}}& L^{1}_{3}\ar[r]^{\partial_{3}} & L^{1}_{2} \ar[r]^{\partial_{2}} & L^{1}_{1} \ar[r]^{\partial_{1}} & S^{1}_{0} \ar[r]^{\partial_{0}\quad} &
  \partial_{0}(S^{1}_{0})
  \ar[r]^{} & 0}, \eqno{(S1)}$$
where $L^{1}_{i}$ is a submodule of $P_{i}$ of cardinality less than or equal to $\aleph$ such that
$\partial_{i}(L^{1}_{i}) =\mathrm{Ker}(\partial_{i-1}|_{L^{1}_{i-1}})$ for all $i > 0$ (we let $L^{1}_{0}=S^{1}_{0}$).
Now, we can embed $\partial_{0}(S^{1}_{0})$ into a projective submodule $S^{2}_{-1}\leq P_{-1}$, such that $|S^{2}_{-1}|\leq\aleph$
and $P_{-1}/S^{2}_{-1}$ being a projective module. Then consider the acyclic complex
$$\xymatrix@C=15pt{
  \cdots   \ar[r]^{}& L^{2}_{3}\ar[r]^{\partial_{3}}& L^{2}_{2}\ar[r]^{\partial_{2}} & L^{2}_{1} \ar[r]^{\partial_{1}} & L^{2}_{0} \ar[r]^{\partial_{0}} & S^{2}_{-1} \ar[r]^{\partial_{-1}\quad} &
  \partial_{-1}(S^{2}_{-1})
  \ar[r]^{} & 0}, \eqno{(S2)}$$
where each $L^{2}_{i}$ is taken as before. If we embed $L^{2}_{0}$ into a projective submodule $S^{3}_{0}$ of $P_{0}$ and construct $L^{3}_{i}$ as before, we then get a complex which is also acyclic:
$$\xymatrix@C=15pt{
  \cdots   \ar[r]^{}& L^{3}_{2}\ar[r]^{\partial_{2}} & L^{3}_{1} \ar[r]^{\partial_{1}} & S^{3}_{0} \ar[r]^{\partial_{0}\quad\quad}
  & S^{2}_{-1}+\partial_{0}(S^{3}_{0}) \ar[r]^{\quad\partial_{-1}} &\partial_{-1}(S^{2}_{-1})\ar[r]^{} & 0}. \eqno{(S3)}$$
Now choose a projective submodule $S^{4}_{1}\leq P_{1}$ with $|S^{4}_{1}|\leq\aleph$, which contains $L^{3}_{1}$, such that $P_{1}/S^{4}_{1}$ is a projective module. We then get an acyclic complex
$$\xymatrix@C=15pt{
  \cdots   \ar[r]^{}& L^{4}_{3}\ar[r]^{\partial_{3}}& L^{4}_{2}\ar[r]^{\partial_{2}} & S^{4}_{1} \ar[r]^{\partial_{1}\quad\quad} & S^{3}_{0}+\partial_{1}(S^{4}_{1}) \ar[r]^{\partial_{0}}
  & S^{2}_{-1}+\partial_{0}(S^{3}_{0}) \ar[r]^{\quad\partial_{-1}} &\partial_{-1}(S^{2}_{-1})\ar[r]^{} & 0}. \eqno{(S4)}$$
Now we turn over and get the following acyclic complexes
$$\xymatrix@C=15pt{
  \cdots   \ar[r]^{}& L^{5}_{3}\ar[r]^{\partial_{3}}& L^{5}_{2}\ar[r]^{\partial_{2}} & L^{5}_{1} \ar[r]^{\partial_{1}} & S^{5}_{0} \ar[r]^{\partial_{0}\quad\quad}
  & S^{2}_{-1}+\partial_{0}(S^{5}_{0}) \ar[r]^{\quad\partial_{-1}} &\partial_{-1}(S^{2}_{-1})\ar[r]^{} & 0}, \eqno{(S5)}$$
$$\xymatrix@C=15pt{
  \cdots   \ar[r]^{}& L^{6}_{3}\ar[r]^{\partial_{3}}& L^{6}_{2}\ar[r]^{\partial_{2}} & L^{6}_{1} \ar[r]^{\partial_{1}} & L^{6}_{0}\ar[r]^{\partial_{0}}
  & S^{6}_{-1}\ar[r]^{\partial_{-1}\quad} &\partial_{-1}(S^{6}_{-1})\ar[r]^{} & 0}, \eqno{(S6)}$$
$$\xymatrix@C=15pt{
  \cdots   \ar[r]^{}& L^{7}_{2}\ar[r]^{\partial_{2}}& L^{7}_{1}\ar[r]^{\partial_{1}} & L^{7}_{0} \ar[r]^{\partial_{0}} & L^{7}_{-1}\ar[r]^{\partial_{-1}}
  & S^{7}_{-2}\ar[r]^{\partial_{-2}\quad} &\partial_{-2}(S^{7}_{-2})\ar[r]^{} & 0}, \eqno{(S7)}$$
$$\xymatrix@C=15pt{
  \cdots   \ar[r]^{}& L^{8}_{2}\ar[r]^{\partial_{2}}& L^{8}_{1}\ar[r]^{\partial_{1}} & L^{8}_{0} \ar[r]^{\partial_{0}} &S^{8}_{-1}\ar[r]^{\partial_{-1}\quad\quad}
  &S^{7}_{-2}+\partial_{-1}(S^{8}_{-1})\ar[r]^{\quad\partial_{-2}} &\partial_{-2}(S^{7}_{-2})\ar[r]^{} & 0}, \eqno{(S8)}$$
$$\xymatrix@C=15pt{
  \cdots   \ar[r]^{}& L^{9}_{2}\ar[r]^{\partial_{2}}& L^{9}_{1}\ar[r]^{\partial_{1}} & S^{9}_{0} \ar[r]^{\partial_{0}\quad\quad} & S^{8}_{-1}+\partial_{0}(S^{9}_{0})\ar[r]^{\partial_{-1}\quad}
  & S^{7}_{-2}+\partial_{-1}(S^{8}_{-1})\ar[r]^{\quad\partial_{-2}} &\partial_{-2}(S^{7}_{-2})\ar[r]^{} & 0}, \eqno{(S9)}$$
where $S^{k}_{i}$ are projective submodules of $P_{i}$, such that $|S^{k}_{i}|\leq\aleph$ and $P_{i}/S^{k}_{i}$ being projective.

If we continue this zig-zag procedure, we then find acyclic complexes $(Sn)$ for all $n$, in such a way that there are infinitely many $n$
with $(Sn)_{i}$ a projective submodule of $P_{i}$ for each $i\in \mathbb{Z}$. Furthermore, we have $M\leq (Sn)_{0}$ and
$|(Sn)|\leq \aleph_{0}\cdot\aleph\leq \aleph$  for any $n$. Let $\mathbf{D}$ be the direct limit of $(Sn)$, $n\in \mathbb{Z}$. Then $\mathbf{D}$ is the desired acyclic complex of projective modules.

\medskip
\noindent{\bf Claim 2.} Let $\aleph > |A|+\aleph_{0}$ be an infinite cardinal, and $M$ a Gorenstein projective $A$-module. Then for any submodule $K\leq M$ with $|K|\leq \aleph$, there exists a submodule $N$ of $M$, such that $K\leq N$, $N$ and $M/N$ are Gorenstein projective modules, and $|N|\leq \aleph$.

There exists an acyclic complex $\mathbf{P}:=\cdots \rightarrow P_{1}\rightarrow P_{0}\rightarrow P_{-1}\rightarrow \cdots$
of projective $A$-modules, such that $M = \mathrm{Ker}(P_{0}\rightarrow P_{-1})$. By the above argument, for complex $\mathbf{C}= \underline{K}$, there is an acyclic subcomplex $\mathbf{D}:=\cdots \rightarrow D_{1}\rightarrow D_{0}\rightarrow D_{-1}\rightarrow \cdots$ of projective $A$-modules, such that $|\mathbf{D}|\leq \aleph$, $\mathbf{C}\leq \mathbf{D}$ and $\mathbf{D}/\mathbf{C}\in \mathrm{ac}\widetilde{\mathcal{P}}(A)$.
Thus, $N= \mathrm{Ker}(D_{0}\rightarrow D_{-1})$ is the desired submodule of $M$.

\medskip
\noindent{\bf Claim 3.}  $(\mathcal{GP}, \mathcal{W})$ is a cotorsion pair cogenerated by a set.

Let $M\in \mathcal{GP}$. By transfinite induction we can find a continuous chain of submodules of $M$, say $\{M_{\alpha}; \alpha< \lambda\}$, for some ordinal number $\lambda$ such that $M=\cup_{\alpha< \lambda}M_{\alpha}$; $M_{0}$, $M_{\alpha+1}/M_{\alpha}$ are in $\mathcal{GP}$, and $|M_{0}|\leq \aleph$, $|M_{\alpha+1}/M_{\alpha}|\leq \aleph$ for any $\alpha< \lambda$. But since $\mathcal{GP}$ is closed under extensions and direct limits, in fact each $M_{\alpha}$ belongs to $\mathcal{GP}$, and so every module in $\mathcal{GP}$ is the direct union of a continuous chain of submodules in  $\mathcal{GP}$ with cardinality less than or equal to $\aleph$. Note that $\mathcal{GP}$ is a Kaplansky class (see \cite{ELR02, Gil07}), or equivalently, a deconstructible class (see \cite{Sto13}).

Thus, if we let $\mathcal{S}$ be a representative set of modules $M\in \mathcal{GP}$ with $|M|\leq \aleph$, then a module $N\in\mathcal{GP}^{\perp}$
if and only if $\mathrm{Ext}_{A}^{1}(M, N) =0$ for any $M\in \mathcal{S}$, that is, $(\mathcal{GP}, \mathcal{GP}^{\perp})$ is cogenerated by the set $\mathcal{S}$ (see e.g. \cite[Theorem 7.3.4]{EJ00}). The equality $\mathcal{GP}^{\perp}=\mathcal{W}$ follows by a standard argument, so we omit it. This completes the proof.
\end{proof}

\medskip
\noindent{\bf 2.3~~~~ Separable Frobenius extensions}
\medskip

The separable algebra enjoys some of the attractive properties of semisimple algebras. The separability of rings and algebras has been concerned by many authors, for example, Azumaya, Auslander and Goldman. We refer to \cite[Charpter 10]{Pie82} and \cite[Section 2.4]{Kad99} for separable rings (algebras).

\begin{df}\label{def 3}
A ring extension $R\subset A$ is separable provided the multiplication map $\varphi: A\otimes_{R}A\rightarrow A$ ($a\otimes_{R}b\rightarrow ab$) is a split epimorphism of $A$-bimodules. If $R\subset A$ is simultaneously a Frobenius extension and a separable extension, then it is called a separable Frobenius extension.
\end{df}

Note that for any left $A$-module $M$, there is a natural map $\theta: A\otimes_{R}M\rightarrow M$ given by $\theta(a\otimes_{R}m)=am$ for any $a\in A$ and $m\in M$. It is easy to check that $\theta$ is surjective, and as an $R$-homomorphism it is split. However, in general $\theta$ is not split as an $A$-homomorphism. The following is analogous to the results in \cite{Pie82} for separable algebras over commutative rings.

\begin{lem}\label{lem 2.3} The following are equivalent:\\
\indent $(1)$ $R\subset A$ is a separable extension.\\
\indent $(2)$ For any $A$-bimodule $M$, $\theta: A\otimes_{R}M\rightarrow M$ is a split epimorphism of $A$-bimodules.\\
\indent $(3)$ There exists an element $e\in A\otimes_{R}A$, such that $\varphi(e)=1_{A}$ and $ae=ea$ for any $a\in A$.
\end{lem}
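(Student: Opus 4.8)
The statement to prove is Lemma 2.3: for a ring extension $R \subset A$, the following are equivalent — (1) $R \subset A$ is separable; (2) for every $A$-bimodule $M$, the multiplication map $\theta \colon A \otimes_R M \to M$ splits as a map of $A$-bimodules; (3) there is a \emph{separability element} $e \in A \otimes_R A$ with $\varphi(e) = 1_A$ and $ae = ea$ for all $a \in A$.

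Let me sketch my approach. This is a standard cyclic equivalence, and I would prove it in the order $(1) \Rightarrow (3) \Rightarrow (2) \Rightarrow (1)$, since each implication feeds naturally into the next.

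The plan is to prove the cycle of implications $(1) \Rightarrow (3) \Rightarrow (2) \Rightarrow (1)$, since each condition feeds naturally into the next. First, for $(1) \Rightarrow (3)$, Definition \ref{def 3} furnishes an $A$-bimodule section $\sigma \colon A \to A \otimes_{R} A$ of the multiplication map $\varphi$, and I would set $e := \sigma(1_{A})$. Then $\varphi(e) = 1_{A}$ is immediate, while the centrality $ae = ea$ follows from bimodule linearity of $\sigma$: for every $a \in A$ one has $ae = a\,\sigma(1_{A}) = \sigma(a \cdot 1_{A}) = \sigma(a) = \sigma(1_{A} \cdot a) = \sigma(1_{A})\,a = ea$. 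This produces the required separability element.

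For $(3) \Rightarrow (2)$, write $e = \sum_{i} x_{i} \otimes_{R} y_{i}$ with $\sum_{i} x_{i} y_{i} = 1_{A}$ and $ae = ea$ for all $a \in A$. Given an arbitrary $A$-bimodule $M$, I would define $s \colon M \to A \otimes_{R} M$ by $s(m) = \sum_{i} x_{i} \otimes_{R} y_{i} m$ and verify that it is a bimodule section of $\theta$. The retraction identity is immediate, since $\theta(s(m)) = \sum_{i} x_{i}(y_{i} m) = (\sum_{i} x_{i} y_{i})\,m = m$, and right $A$-linearity follows formally from the bimodule associativity $y_{i}(mb) = (y_{i}m)b$, using no special property of $e$. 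The key point is left $A$-linearity: here I would transport the centrality relation $be = eb$, that is $\sum_{i} b x_{i} \otimes_{R} y_{i} = \sum_{i} x_{i} \otimes_{R} y_{i} b$ in $A \otimes_{R} A$, along the $R$-balanced map $A \otimes_{R} A \to A \otimes_{R} M$ sending $u \otimes_{R} v \mapsto u \otimes_{R} vm$. Applying this map yields $b\,s(m) = \sum_{i} b x_{i} \otimes_{R} y_{i} m = \sum_{i} x_{i} \otimes_{R} (y_{i} b)m = \sum_{i} x_{i} \otimes_{R} y_{i}(bm) = s(bm)$, as required.

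Finally, $(2) \Rightarrow (1)$ is obtained by specializing $M = A$ with its canonical bimodule structure, so that $\theta$ coincides with $\varphi$ and its bimodule splitting is exactly the separability condition of Definition \ref{def 3}. I expect the only delicate step to be the verification of left $A$-linearity in $(3) \Rightarrow (2)$, as this is the sole place where the centrality hypothesis $ae = ea$ is actually used; one must be careful that the identity $be = eb$, which a priori holds only in $A \otimes_{R} A$, is preserved under the specialization $v \mapsto vm$ into $A \otimes_{R} M$. The remaining verifications, together with both other implications, are entirely formal.
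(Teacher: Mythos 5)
Your proposal is correct; every implication goes through. The organization differs mildly from the paper's: the paper proves $(2)\Rightarrow(1)$ by specializing $M=A$, proves $(1)\Rightarrow(2)$ functorially by tensoring the bimodule section $\psi\colon A\to A\otimes_{R}A$ with $M$ over $A$ (via the commutative square involving $\varphi\otimes\mathrm{id}_{M}$, the natural isomorphism $\pi\colon A\otimes_{A}M\cong M$, and the associativity map $\mu$, setting $\chi=\mu(\psi\otimes\mathrm{id}_{M})\pi^{-1}$), and then establishes $(1)\Leftrightarrow(3)$ separately; you instead run the cycle $(1)\Rightarrow(3)\Rightarrow(2)\Rightarrow(1)$ and build the section of $\theta$ explicitly as $s(m)=\sum_{i}x_{i}\otimes_{R}y_{i}m$ from a separability element $e=\sum_{i}x_{i}\otimes_{R}y_{i}$. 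If one unwinds the paper's $\chi$, it is exactly your $s$, so the two arguments construct the same splitting; yours is the more elementary, element-wise version, and you correctly isolate the only delicate points, namely that $s$ is independent of the chosen representation of $e$ (being the image of $e$ under the well-defined map $u\otimes_{R}v\mapsto u\otimes_{R}vm$) and that left $A$-linearity is precisely where centrality $ae=ea$ is used, whereas the paper's functorial route gets well-definedness and bilinearity for free at the cost of checking the commutativity of the diagram. Both are complete proofs; the difference is one of packaging rather than substance.
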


\begin{proof}
(1) is a special case of (2) by letting $M=A$. Now assume (1) holds. For an $A$-bimodule $M$, we have the following diagram
$$\xymatrix@C=40pt{
  (A\otimes_{R}A)\otimes_{A}M\ar[r]^{\quad\quad\varphi\otimes \mathrm{id}_{M}}\ar[d]_{\mu} & A\otimes_{A} M \ar[d]^{\pi}\\
  A\otimes_{R}M \ar[r]^{\quad\theta} & M
  }$$
where $\pi$ is a natural isomorphism, and $\mu$ is the composition
$$(A\otimes_{R}A)\otimes_{A}M\longrightarrow A\otimes_{R}(A\otimes_{A}M)\stackrel{\mathrm{id}_{A}\otimes \pi}\longrightarrow A\otimes_{R}M.$$
An easy calculation shows that the diagram commutes. Let $\psi: A\rightarrow A\otimes_{R}A$ be a homomorphism of $A$-bimodules such that $\varphi\psi = \mathrm{id}_{A}$. If we define $\chi = \mu(\psi\otimes \mathrm{id}_{M})\pi^{-1}$, then $\chi$ is an $A$-bimodule homomorphism such that $\theta\chi = \mathrm{id}_{M}$. Hence, the epimorphism of $A$-bimodules $\theta: A\otimes_{R}M\rightarrow M$ is split.

It remains to prove the equivalence of (1) and (3). If $\varphi: A\otimes_{R}A \rightarrow A$ is split, then $e=\psi(1_{A})\in A\otimes_{R}A$, such that $\varphi(e) = \varphi(\psi(1_{A}))= 1_{A}$, and $ae=\psi(a1_{A})=\psi(1_{A}a)=ea$ for any $a\in A$. Conversely, if there is an element $e\in A\otimes_{R}A$ satisfying (3), and $\psi: A\rightarrow A\otimes_{R}A$ is defined by $\psi(a)=ae$, then $\varphi\psi(a) = \varphi(ae) = a\varphi(e)=a$. Moreover, $\psi(ab)=(ab)e=a(be)=a\psi(b)$,
and $\psi(ab)= a(be)=a(eb)= (ae)b=\psi(a)b$, that is, $\psi$ is an $A$-bimodule homomorphism. Thus, $R\subset A$ is separable.
\end{proof}

\begin{exa}\label{exa 2.1}
\indent $(1)$ For a finite group $G$, $\mathbb{Z}\subset \mathbb{Z}G$ is a separable Frobenius extension. Indeed, let $e=\frac{1}{|G|}\sum_{g\in G}g\otimes_{\mathbb{Z}}g^{-1}\in \mathbb{Z}G\otimes_{\mathbb{Z}}\mathbb{Z}G$, where $|G|$ is the order of $G$. It is direct to check that $e$ satisfies the condition (3) of the above lemma.\\
\indent $(2)$ (\cite[Example 2.7]{Kad99}) Let $F$ be a field and set $A=M_{4}(F)$. Let $R$ be the subalgebra of $A$ with $F$-basis consisting of the idempotents and matrix units: $$e_{1}=e_{11}+e_{44}, e_{2}=e_{22}+e_{33}, e_{21}, e_{31}, e_{41}, e_{42}, e_{43}.$$
Then $R\subset A$ is a separable Frobenius extension.
\end{exa}

If $R\subset A$ is a separable extension, it follows from the above argument that as left $A$-modules, $M$ is a direct summand of $A\otimes_{R}M$. The following is immediate from Lemma \ref{lem 2.1} and Lemma \ref{lem 2.2}(2).

\begin{thm}\label{thm 2.3} Let $R\subset A$ be a separable Frobenius extension, $M$ a left $A$-module. Then
$M$ is a Gorenstein projective $A$-module if and only if the underlying $R$-module $M$ is Gorenstein projective.
\end{thm}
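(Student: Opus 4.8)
The plan is to extract both implications almost directly from the results already in hand, with separability used only to transfer summand information from $A\otimes_R M$ back to $M$. The forward implication costs nothing new: if $M$ is Gorenstein projective in $\mathrm{Mod}(A)$, then Lemma \ref{lem 2.1} immediately yields that the underlying module $_R M$ is Gorenstein projective, and this holds for every Frobenius extension, separable or not.

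For the converse, suppose $_R M$ is Gorenstein projective. First I would invoke Lemma \ref{lem 2.2}(2) to conclude that $A\otimes_R M$ is a Gorenstein projective left $A$-module. The only role of separability is then to realize $M$ as a direct summand of $A\otimes_R M$ in $\mathrm{Mod}(A)$. Concretely, using the separability idempotent $e=\sum_i u_i\otimes_R v_i\in A\otimes_R A$ furnished by Lemma \ref{lem 2.3}(3), with $\varphi(e)=1_A$ and $ae=ea$, I would define $\chi\colon M\to A\otimes_R M$ by $\chi(m)=\sum_i u_i\otimes_R v_i m$. The relation $\varphi(e)=1_A$ gives $\theta\chi=\mathrm{id}_M$ for the canonical surjection $\theta\colon A\otimes_R M\to M$, while pushing the centrality relation $ae=ea$ into $A\otimes_R M$ along the $R$-balanced map $y\mapsto ym$ shows that $\chi$ is left $A$-linear. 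Hence $M$ is a direct summand of $A\otimes_R M$ as left $A$-modules, which is precisely the observation recorded in the paragraph preceding the statement.

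Finally I would appeal to the standard fact that the class $\mathcal{GP}$ of Gorenstein projective modules is closed under direct summands: since $A\otimes_R M$ is Gorenstein projective in $\mathrm{Mod}(A)$ and $M$ is a summand of it, $M$ is Gorenstein projective in $\mathrm{Mod}(A)$, completing the converse. There is no genuine obstacle here, as the substance is carried entirely by the two lemmas; the one point that deserves care is the $A$-linearity of the splitting $\chi$, where knowing only that $\theta$ splits over $R$ would be insufficient, and the separability idempotent is exactly what upgrades the splitting from $\mathrm{Mod}(R)$ to $\mathrm{Mod}(A)$. Closure of $\mathcal{GP}$ under direct summands is routine and can simply be cited.
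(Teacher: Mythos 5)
Your proposal is correct and follows essentially the same route as the paper: the forward direction is Lemma \ref{lem 2.1}, and the converse combines Lemma \ref{lem 2.2}(2) with the fact that separability makes $M$ an $A$-module direct summand of $A\otimes_{R}M$, then cites closure of $\mathcal{GP}$ under direct summands. Your explicit verification that the separability idempotent yields an $A$-linear splitting $\chi$ of $\theta$ is exactly the content the paper delegates to the discussion after Lemma \ref{lem 2.3}, so nothing is missing.
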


We note that relationship between Gorenstein projective modules over ring extensions are considered in other conditions, for example, in \cite{HS12} for excellent extensions of rings, and in \cite{LS13} for cross product of Hopf algebras.

\section {\bf Gorenstein projective graded $R[x]/(x^{2})$-modules}

Throughout this section, $R$ is an arbitrary ring, $A=R[x]/(x^{2})$ is the quotient of the polynomial ring, where $x$ is a variable
which is supposed to commute with all the elements of $R$.

\begin{lem}\label{lem 3.1}
The extension of rings $R\subset A$ is a Frobenius extension.
\end{lem}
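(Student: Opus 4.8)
The plan is to verify the dual-basis characterization of Frobenius extensions, namely condition $(4)$ of Definition \ref{def 1}, which is the most directly computable of the four. First I would record the relevant $R$-module structure: since $x^2 = 0$ and $x$ commutes with every element of $R$, each element of $A$ is written uniquely as $a_0 + a_1 x$ with $a_0, a_1 \in R$, so that $A = R \oplus Rx$ is free of rank two as a left (and symmetrically as a right) $R$-module. In particular $_R A$ is finitely generated projective, which is the standing requirement behind all four equivalent conditions.

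Next I would exhibit an explicit Frobenius system. Define $\tau \colon A \to R$ to be the projection onto the coefficient of $x$, that is, $\tau(a_0 + a_1 x) = a_1$. The point requiring a little care is that $\tau$ must be a homomorphism of $R$-$R$-bimodules: for $r \in R$ one has $r(a_0 + a_1 x) = ra_0 + (ra_1)x$ and $(a_0 + a_1 x)r = a_0 r + (a_1 r)x$, the latter using the centrality $xr = rx$, so that $\tau$ is compatible with both the left and the right $R$-action. I would then take as the candidate dual data the elements $x_1 = 1,\ y_1 = x$ and $x_2 = x,\ y_2 = 1$ of $A$.

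Finally I would check the two identities of condition $(4)$. Writing $a = a_0 + a_1 x$ and repeatedly using $x^2 = 0$ together with $xr = rx$, one gets $y_1 a = a_0 x$, hence $\tau(y_1 a) = a_0$, while $\tau(y_2 a) = a_1$, so that $\sum_i x_i \tau(y_i a) = a_0 + a_1 x = a$; the symmetric computation yields $\sum_i \tau(a x_i) y_i = a_1 x + a_0 = a$. This presents $(\tau; x_i, y_i)$ as a Frobenius system and thereby identifies $R \subset A$ as a Frobenius extension.

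I do not expect a genuine obstacle here, since the statement reduces to a short explicit verification. The only subtlety is establishing the two-sided (rather than merely one-sided) bimodule property of $\tau$, which is precisely where the hypothesis that $x$ is central over $R$ is used; once that is in place, both dual-basis identities drop out of the displayed computation.
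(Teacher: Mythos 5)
Your proof is correct, and it takes a genuinely different (though ultimately equivalent) route from the paper's. You verify condition $(4)$ of Definition \ref{def 1} by exhibiting an explicit Frobenius system: the coefficient-of-$x$ map $\tau(a_0+a_1x)=a_1$ together with the dual bases $x_1=1,\,y_1=x$ and $x_2=x,\,y_2=1$; your computations of both identities check out, and you correctly isolate the centrality of $x$ as the point where the two-sided $R$-$R$-linearity of $\tau$ is used. The paper instead verifies condition $(3)$: it notes that $A_R$ is finitely generated projective and writes down an explicit bimodule isomorphism $\varphi\colon{}_RA_A\to\mathrm{Hom}_{R^{op}}({}_AA_R,R)$, namely $\varphi(r_0+r_1x)(s_0+s_1x)=r_0s_0+r_0s_1+r_1s_0$, with inverse $f\mapsto f(x)+(f(1)-f(x))x$. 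The two arguments encode essentially the same data in different packaging --- an isomorphism $A\cong A^*$ is equivalent to a Frobenius system --- but note that the paper's isomorphism corresponds to the Frobenius homomorphism $a_0+a_1x\mapsto a_0+a_1$ rather than your $a_0+a_1x\mapsto a_1$, illustrating that the Frobenius homomorphism is not unique. Your version via condition $(4)$ is arguably the more transparent verification, since the two dual-basis identities are immediate one-line computations, whereas the paper's route requires checking that $\varphi$ is simultaneously left $R$-linear and right $A$-linear and that $\psi$ really inverts it.
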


\begin{proof}
It is clear that $A_{R}$ is a finitely generated projective module. There is an $R$-$A$-homomorphism $\varphi: A\rightarrow \mathrm{Hom}_{R^{op}}(_{A}A_{R}, R)$ given by $\varphi(r_0+r_1x)(s_0+s_1x)=r_{0}s_{0}+r_{0}s_{1}+r_{1}s_{0}$
for any $r_0 + r_1x$ and $s_0 + s_1x$ in $A$, and a homorphism $\psi: \mathrm{Hom}_{R^{op}}(_{A}A_{R}, R)\rightarrow A$ which maps any $f\in\mathrm{Hom}_{R^{op}}(_{A}A_{R}, R)$ to an element $f(x)+(f(1)-f(x))x$ in $A$. It is direct to check that $\varphi\psi= \mathrm{id}$ and $\psi\varphi=\mathrm{id}$. The assertion follows.
\end{proof}

One can view $A$ as a graded ring with a copy of $R$ (generated by 1) in degree 0
and a copy of $R$ (generated by $x$) in degree 1, and 0 otherwise. A graded $A$-module $M$ is an $A$-module with a additive subgroup decomposition $M=\bigoplus_{i\in \mathbb{Z}}M^{i}$, such that $A^{i}M^{j}\subset M^{i+j}$ for all $i$ and $j$. Consider graded $A$-modules $M$ and $N$. An $A$-linear map $f:M\rightarrow N$ has degree $d$ if $f(M^i)\subset N^{i+d}$. The set of all degree $d$ maps from $M$ to $N$ is denoted by $\mathrm{Hom}_{A}(M,N)_{d}$. We define $\mathrm{Hom}_{\mathrm{Gr}}(M,N):=\mathrm{Hom}_{A}(M,N)_{0}$. The category $\mathrm{GrMod}(A)$ consists of graded left $A$-modules and the morphisms are taken to be the graded morphism of degree zero. Note that by forgetting the grading on a module, there is naturally a functor $\mathrm{GrMod}(A)\rightarrow \mathrm{Mod}(A)$.

There is an observation that the category $\mathrm{GrMod}(A)$ is isomorphic to the category $\mathrm{Ch}(R)$ of $R$-complexes, where $M=\bigoplus_{i\in \mathbb{Z}}M^{i}$ corresponds to the cochain complex $\cdots\rightarrow M^{i-1}\rightarrow M^{i}\rightarrow M^{i+1}\rightarrow\cdots$ of $R$-modules, with the differential corresponding to multiplication by $x$; see for example \cite{GH10}.
It is clear that the isomorphism of categories between $\mathrm{GrMod}(A)$ and $\mathrm{Ch}(R)$ automatically preserves projectives.

Let $\mathcal{C}$ be an abelian category with enough projectives. An object $M\in \mathcal{C}$ is said to be Gorenstein projective if it is a syzygy of a totally acyclic complex of projectives. The notion of Gorenstein projective complexes is introduced by Enochs and Garc\'{i}a Rozas \cite[Definition 4.1]{EGR98} as Gorenstein projective objects in $\mathrm{Ch}(R)$. We call the Gorenstein projective objects in $\mathrm{GrMod}(A)$ to be Gorenstein projective graded $A$-modules.

\begin{obs*}\label{obs 3.1}
Let $M = \bigoplus_{i\in \mathbb{Z}}M^{i}\in\mathrm{GrMod}(A)$. Then $M$ is a Gorenstein projective graded $A$-module if and only if $\cdots\rightarrow M^{i-1}\rightarrow M^{i}\rightarrow M^{i+1}\rightarrow\cdots$ is a Gorenstein projective $R$-complex.
\end{obs*}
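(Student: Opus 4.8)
The plan is to reduce the statement entirely to the isomorphism of abelian categories recorded above, together with the fact that Gorenstein projectivity is a purely categorical invariant. Write $F:\mathrm{GrMod}(A)\to \mathrm{Ch}(R)$ for the isomorphism that sends a graded module $M=\bigoplus_{i\in\mathbb{Z}}M^{i}$ to the cochain complex $\cdots\to M^{i-1}\to M^{i}\to M^{i+1}\to\cdots$, with differential given by multiplication by $x$. As already noted, $F$ carries projectives to projectives in both directions. Being an isomorphism of abelian categories, $F$ is in particular exact and induces bijections $\mathrm{Hom}_{\mathrm{GrMod}(A)}(X,Y)\cong \mathrm{Hom}_{\mathrm{Ch}(R)}(FX,FY)$ on all morphism groups.

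Next I would unwind the definition of Gorenstein projectivity, which is phrased solely in terms of the ambient abelian category: an object is Gorenstein projective precisely when it is a syzygy of a complex $\mathbf{P}$ of projectives that is acyclic and such that $\mathrm{Hom}(\mathbf{P},P)$ remains acyclic for every projective object $P$. Each ingredient of this condition transports under $F$: projectivity of the terms is preserved by hypothesis; acyclicity and the formation of syzygies are preserved because $F$ is exact (it respects kernels and images); and the acyclicity of $\mathrm{Hom}(\mathbf{P},P)$ is preserved because $F$ is bijective on morphism groups and every projective of the target is, up to isomorphism, of the form $FP$ for a projective $P$ in the source. Consequently $F$ sends a totally acyclic complex of projectives to a totally acyclic complex of projectives, and the same holds for $F^{-1}$.

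Assembling these observations, $M$ is a syzygy of a totally acyclic complex of projectives in $\mathrm{GrMod}(A)$ if and only if $FM$ is a syzygy of a totally acyclic complex of projectives in $\mathrm{Ch}(R)$; the latter is by definition the condition that the associated cochain complex $\cdots\to M^{i-1}\to M^{i}\to M^{i+1}\to\cdots$ be a Gorenstein projective $R$-complex in the sense of Enochs and Garc\'{i}a Rozas. This yields both implications simultaneously.

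I do not anticipate a genuine obstacle, since the statement is formal once the isomorphism of categories is in hand; the only point deserving care is verifying that the total-acyclicity test against \emph{all} projectives via $\mathrm{Hom}(-,P)$ transports correctly, which follows from $F$ being bijective on morphisms and essentially surjective onto projectives. Should one prefer to avoid invoking the abstract isomorphism, the alternative is to transcribe a totally acyclic complex of graded projective $A$-modules termwise into the corresponding diagram of $R$-complexes and check acyclicity degreewise; but this merely re-establishes the categorical statement by hand.
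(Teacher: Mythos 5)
Your argument is correct and coincides with the paper's treatment: the paper offers no separate proof of this Observation, regarding it as immediate from the isomorphism $\mathrm{GrMod}(A)\cong\mathrm{Ch}(R)$ (which preserves projectives) together with the purely categorical definition of Gorenstein projective objects, which is exactly the reduction you carry out. Your only addition is to spell out explicitly why total acyclicity against all projectives transports, which is a harmless elaboration of the same idea.
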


The main result of this section is stated as follows.

\begin{thm}\label{thm 3.1} Let $M\in \mathrm{GrMod}(A)$ be a graded $A$-module. The following are equivalent:\\
\indent $(1)$ $M$ is Gorenstein projective in $\mathrm{GrMod}(A)$.\\
\indent $(2)$ $M$ is Gorenstein projective in $\mathrm{Mod}(A)$.\\
\indent $(3)$ $M$ is Gorenstein projective in $\mathrm{Mod}(R)$.
\end{thm}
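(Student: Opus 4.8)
The plan is to establish the two ``horizontal'' equivalences $(2)\Leftrightarrow(3)$ (ungraded) and $(1)\Leftrightarrow(3)$ (graded) by the same mechanism; together these give all three equivalences. Note at the outset that for arbitrary $R$ the extension $R\subset A$ is in general neither a left-Gorenstein nor a separable Frobenius extension, so Theorems \ref{thm 2.1} and \ref{thm 2.3} do not apply and a self-contained argument is needed; this is where the difficulty lies. The implication $(2)\Rightarrow(3)$ is exactly Lemma \ref{lem 2.1}, and $(1)\Rightarrow(3)$ will be its graded analogue, so the real content is the two converse directions.

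For $(3)\Rightarrow(2)$, assume $_{R}M$ is Gorenstein projective. By Lemma \ref{lem 2.2}, $A\otimes_{R}M$ is Gorenstein projective over $A$ and $\mathrm{Ext}^{i}_{A}(M,P)=0$ for every projective $A$-module $P$ and $i>0$. Since the extension is Frobenius, the counit $\theta\colon A\otimes_{R}M\to M$ is a surjection that is $R$-split (as already observed in the excerpt), and dually the unit $M\hookrightarrow A\otimes_{R}M$ coming from $A\otimes_{R}-\cong\mathrm{Hom}_{R}(A,-)$ is an $R$-split injection; hence the corresponding kernels and cokernels are $R$-direct summands of $A\otimes_{R}M$ and so have Gorenstein projective underlying $R$-modules. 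Splicing copies of these short exact sequences (units to the right, counits to the left) produces an acyclic doubly infinite complex
$$\mathbf{G}\colon\quad \cdots\longrightarrow A\otimes_{R}M^{(-1)}\longrightarrow A\otimes_{R}M\longrightarrow A\otimes_{R}M^{(1)}\longrightarrow\cdots$$
in which every term is Gorenstein projective over $A$, every syzygy $M^{(k)}$ has Gorenstein projective $R$-restriction, and $M=M^{(0)}$ is a syzygy. Because $\mathrm{Ext}^{1}_{A}(M^{(k)},P)=0$ for all $k$ (Lemma \ref{lem 2.2}(1) applied to each $M^{(k)}$), the complex $\mathrm{Hom}_{A}(\mathbf{G},P)$ is acyclic for every projective $A$-module $P$. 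Finally I would invoke the standard categorical fact that a syzygy of an acyclic complex of Gorenstein projective objects which remains acyclic under $\mathrm{Hom}(-,P)$ for all projectives $P$ is itself Gorenstein projective; applied to $M$ this yields $(2)$.

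For $(1)\Leftrightarrow(3)$ I would run the identical argument one level up. The extension $R\subset A$ is a \emph{graded} Frobenius extension (with $x$ in degree $1$), so the graded analogues of Lemmas \ref{lem 2.1} and \ref{lem 2.2} hold with verbatim proofs, using the graded adjunctions and the isomorphism $\mathrm{Hom}_{R}(A,-)\cong A\otimes_{R}-$ up to a harmless degree shift. Since $R$ sits in degree $0$, the category $\mathrm{GrMod}(R)$ is the product $\prod_{i\in\mathbb{Z}}\mathrm{Mod}(R)$, so an object is Gorenstein projective in $\mathrm{GrMod}(R)$ exactly when each component $M^{i}$ is Gorenstein projective over $R$; as $\mathcal{GP}$ is closed under arbitrary direct sums and direct summands, this is equivalent to $\bigoplus_{i}M^{i}={}_{R}M$ being Gorenstein projective, i.e. to $(3)$. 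The graded Lemma \ref{lem 2.1} then gives $(1)\Rightarrow(3)$, and the graded splicing argument above, together with the GP-syzygy fact (which is purely categorical and holds verbatim in $\mathrm{GrMod}(A)$), gives $(3)\Rightarrow(1)$.

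The hard part is the pair $(3)\Rightarrow(2)$ and $(3)\Rightarrow(1)$: having only Ext-vanishing against projectives and the Gorenstein projectivity of the \emph{induced} module $A\otimes_{R}M$, one must manufacture a genuine complete resolution through $M$ itself. The idea that makes this work is that the Frobenius unit and counit resolve $M$ by the modules $A\otimes_{R}M^{(k)}$, which are Gorenstein projective but only rarely projective; the passage from ``Gorenstein projective terms'' to ``projective terms'' is then absorbed entirely by the GP-syzygy lemma. Having obtained $(1)\Leftrightarrow(3)$, I would read it through the isomorphism $\mathrm{GrMod}(A)\cong\mathrm{Ch}(R)$ of the Observation to record Corollary \ref{cor 3.1} as an immediate consequence.
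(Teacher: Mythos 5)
Your strategy for the hard directions is genuinely different from the paper's, and in outline it can be made to work; but everything hinges on the final ``standard categorical fact'', and as stated that fact is not standard --- this is the one real gap. What you need is: if $M$ is a cycle of an exact complex $\mathbf{G}$ of Gorenstein projective $A$-modules such that $\mathrm{Hom}_{A}(\mathbf{G},P)$ is acyclic for every \emph{projective} $P$, then $M$ is Gorenstein projective. The usual stability theorem (``Gorenstein projective of Gorenstein projective is Gorenstein projective'', Sather-Wagstaff--Sharif--White) assumes the stronger hypothesis that $\mathrm{Hom}_{A}(\mathbf{G},G)$ is acyclic for every Gorenstein projective $G$, and your complex of induced modules $A\otimes_{R}M^{(k)}$ is only known to satisfy the weaker condition, since Lemma \ref{lem 2.2}(1) gives Ext-vanishing of the syzygies against projectives only. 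The weaker statement is in fact true: one first checks that every cycle $C$ of $\mathbf{G}$ satisfies $\mathrm{Ext}^{\geq 1}_{A}(C,P)=0$ (every map from a cycle to $P$ extends to the covering term by the assumed $\mathrm{Hom}(-,P)$-exactness), and then manufactures a $\mathrm{Hom}(-,P)$-exact projective coresolution of $M$ by iterated pushouts along embeddings $G^{i}\hookrightarrow Q^{i}$ into projectives, the point being that every auxiliary extension $0\to Q\to W\to F\to 0$ that arises splits because each factor $F$ satisfies $\mathrm{Ext}^{1}_{A}(F,Q)=0$. That is a page of argument (or a precise citation to the later stability literature), not something that can be waved at; until it is supplied, $(3)\Rightarrow(2)$ and $(3)\Rightarrow(1)$ are not proved. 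A telling sign that this suppressed lemma carries all the weight: your argument for $(3)\Rightarrow(2)$ uses nothing about $A=R[x]/(x^{2})$ beyond the Frobenius property, so it would prove descent of Gorenstein projectivity along \emph{every} Frobenius extension and render Theorems \ref{thm 2.1} and \ref{thm 2.3} redundant. (That stronger statement is indeed true and was established in subsequent work by essentially your argument plus the stability lemma, which is further evidence that the lemma is the whole difficulty.)

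The paper avoids the issue by never leaving the world of projectives. For $(2)\Rightarrow(1)$ and $(3)\Rightarrow(2)$ it embeds the graded module $M$ directly into a \emph{projective} graded $A$-module, namely the contractible complex with components $N^{i}=G^{i}\oplus G^{i+1}$ built from componentwise embeddings $M^{i}\hookrightarrow G^{i}$ into projective $R$-modules, controls the cokernel with Schanuel's lemma, and checks $\mathrm{Hom}_{\mathrm{Gr}}(-,P)$-exactness by hand using $P=\prod_{i\in\mathbb{Z}}\overline{P^{i}}[-i]$ (Lemmas \ref{lem 3.4} and \ref{lem 3.5}); the implication $(1)\Rightarrow(2)$ is done directly in Lemma \ref{lem 3.3} rather than routed through $(3)$. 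This is special to $R[x]/(x^{2})$ but entirely elementary. Your route is more conceptual and more general, but you must either prove or precisely cite the stability lemma; and the graded half of your argument (the ``graded Frobenius extension up to shift'' and the identification of Gorenstein projectives in $\mathrm{GrMod}(R)$) is correct in substance but should be written out, since the degree shift in $\mathrm{Hom}_{R}(A,-)\cong A\otimes_{R}-$ is exactly the kind of detail that is harmless only after one has checked it.
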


The next result is immediate, which generalizes \cite[Theorem 4.5]{EGR98} by removing the prerequisite that the base ring is Iwanaga-Gorenstein, and generalizes \cite[Theorem 3.1]{LZ11} by removing the condition that the base ring is right coherent and left perfect; see also \cite[Theorem 1]{Yang11}.

\begin{cor}\label{cor 3.1}
Let $M$ be an $R$-complex. Then $M$ is Gorenstein projective in $\mathrm{Ch}(R)$ if and only if each item $M^{i}$ is Gorenstein projective
in $\mathrm{Mod}(R)$.
\end{cor}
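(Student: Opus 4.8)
The plan is to deduce Corollary \ref{cor 3.1} directly from Theorem \ref{thm 3.1} together with the Observation, exploiting the isomorphism of categories between $\mathrm{GrMod}(A)$ and $\mathrm{Ch}(R)$ recorded just before the Observation. Given an $R$-complex $M$, I regard it as a graded $A$-module $\widetilde{M}=\bigoplus_{i\in\mathbb{Z}}M^{i}$ under this isomorphism, with the action of $x$ implementing the differential. The Observation then says that $M$ is Gorenstein projective in $\mathrm{Ch}(R)$ precisely when $\widetilde{M}$ is Gorenstein projective in $\mathrm{GrMod}(A)$, which is condition (1) of Theorem \ref{thm 3.1}.

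Next I would invoke the equivalence (1)$\Leftrightarrow$(3) of Theorem \ref{thm 3.1}: $\widetilde{M}$ is Gorenstein projective in $\mathrm{GrMod}(A)$ if and only if its underlying $R$-module is Gorenstein projective in $\mathrm{Mod}(R)$. The only point requiring a word of explanation is that the underlying $R$-module of $\widetilde{M}$ is exactly $\bigoplus_{i\in\mathbb{Z}}M^{i}$, the direct sum of the items of the complex $M$. So the statement ``the underlying $R$-module is Gorenstein projective'' translates into ``$\bigoplus_{i} M^{i}$ is a Gorenstein projective $R$-module.''

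It then remains to observe that $\bigoplus_{i\in\mathbb{Z}}M^{i}$ is Gorenstein projective in $\mathrm{Mod}(R)$ if and only if each item $M^{i}$ is Gorenstein projective in $\mathrm{Mod}(R)$. This uses the standard fact that the class $\mathcal{GP}$ of Gorenstein projective modules is closed under arbitrary direct sums and, conversely, under direct summands; the former gives one direction and the latter, applied to the canonical split inclusions $M^{i}\hookrightarrow\bigoplus_{j}M^{j}$ and projections, gives the other. Chaining the three equivalences yields the corollary.

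I expect no serious obstacle here, since all the heavy lifting is done by Theorem \ref{thm 3.1}. The only care needed is bookkeeping: being explicit that the functor $\mathrm{GrMod}(A)\to\mathrm{Mod}(R)$ forgetting the grading sends $\widetilde{M}$ to $\bigoplus_{i}M^{i}$ rather than to any single $M^{i}$, and that the grading isomorphism is compatible with the restriction functor so that the Observation and Theorem \ref{thm 3.1} may be applied in tandem. Once this identification is made transparent, the proof is a two-line chain of equivalences.
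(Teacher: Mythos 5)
Your proposal is correct and follows exactly the route the paper intends: the corollary is deduced from Theorem \ref{thm 3.1} via the isomorphism $\mathrm{GrMod}(A)\cong\mathrm{Ch}(R)$ and the Observation, with the underlying $R$-module of the graded module identified as $\bigoplus_{i}M^{i}$. The one auxiliary fact you invoke --- that $\mathcal{GP}$ is closed under arbitrary direct sums and under direct summands over any ring --- is indeed standard (the summand closure is Holm's theorem), so the chain of equivalences is complete.
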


There is a result due to Gillespie and Hovey \cite[Proposition 3.8]{GH10}: every dg-projective complex over $R$ is a Gorenstein projective $A$-module, and the converse holds if $R$ is left and right noetherian and of finite global dimension. It is well-known that the projective dimension of a Gorenstein projective module is either zero or infinity, see for example \cite[Proposition 10.2.3]{EJ00}. If $R$ is a ring of finite global dimension, then dg-projective $R$-complex and Gorenstein projective $R$-complex coincide. So the assumption of noetherian ring in
\cite[Proposition 3.8]{GH10} is not needed.

In the rest of this section, we are devoted to prove Theorem \ref{thm 3.1}. For any graded $A$-module $M$ and $d\in \mathbb{Z}$, we define $M[d]$ to be a shift of $M$, which is equal to $M$ as an ungraded $A$-module but has grading $M[d]^{i}=M^{i+d}$. For any $R$-module $N$, we denote by $\overline{N}$ the graded $A$-module with $N$ in degree -1 and 0; the differential corresponding to multiplication by $x$ is exactly the identity of $N$. The next result is well-known.

\begin{lem}\label{lem 3.2}
Let $N$ be a graded $A$-module. Then $N$ is projective in $\mathrm{GrMod}(A)$ if and only if $N$ is projective in $\mathrm{Mod}(A)$. If we consider $N$ as an $R$-complex, then $N$ is projective in $\mathrm{Ch}(R)$, and there is a family of projective $R$-modules $\{P^i\}_{i\in \mathbb{Z}}$ such that $N = \prod_{i\in \mathbb{Z}}\overline{P^{i}}[-i]$.
\end{lem}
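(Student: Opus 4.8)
The plan is to prove Lemma \ref{lem 3.2} by establishing the three claims in turn, using the isomorphism $\mathrm{GrMod}(A)\cong \mathrm{Ch}(R)$ and the explicit description of projectives in a category of complexes.

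\textbf{Projectivity in $\mathrm{GrMod}(A)$ versus $\mathrm{Mod}(A)$.} First I would treat the equivalence of graded and ungraded projectivity. Since $A=R[x]/(x^2)$ is a $\mathbb{Z}$-graded ring concentrated in degrees $0$ and $1$, the forgetful functor $\mathrm{GrMod}(A)\rightarrow \mathrm{Mod}(A)$ has a graded left adjoint, and the free graded $A$-modules $A[d]$ forget to the free ungraded module $A$. The implication ``projective in $\mathrm{GrMod}(A)$ $\Rightarrow$ projective in $\mathrm{Mod}(A)$'' is the subtle direction and is where the grading being by $\mathbb{Z}$ (a group) is essential: a graded projective is a graded direct summand of a direct sum $\bigoplus_d A[d]^{(I_d)}$, and after forgetting the grading this remains a direct summand of a free $A$-module, hence ungraded projective. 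The reverse implication is standard for group-graded rings; I would cite the usual fact (e.g.\ as in Nastasescu--Van Oystaeyen) or observe that every ungraded projective is a summand of a free module, which carries a grading, so its graded lift is a graded summand of a free graded module.

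\textbf{Translation to $\mathrm{Ch}(R)$ and the structure of projective complexes.} Next I would invoke the category isomorphism $\mathrm{GrMod}(A)\cong \mathrm{Ch}(R)$, which preserves projectives as already noted in the text. Under this isomorphism a graded projective $A$-module becomes a projective object of $\mathrm{Ch}(R)$, and the point is to identify these concretely. The standard description is that the projective objects of $\mathrm{Ch}(R)$ are exactly the contractible complexes built from the ``disk'' complexes $D^i(P)$ (a copy of $P$ in two consecutive degrees joined by the identity) for $P$ projective over $R$. In the notation of the lemma, $\overline{P^i}$ is precisely such a two-term complex with the identity differential, and the shift $[-i]$ places it in the correct degrees. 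So I would show that $N$ is projective in $\mathrm{Ch}(R)$ iff it is a direct sum (or product, which coincides here degreewise since each degree only receives contributions from two consecutive disks) of shifted disks $\overline{P^i}[-i]$ with each $P^i$ a projective $R$-module. The key computation is that $\mathrm{Hom}_{\mathrm{Ch}(R)}(\overline{P}[-i],-)$ is naturally the degree-$i$ component functor composed with $\mathrm{Hom}_R(P,-)$, so these objects are projective and generate all projectives.

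\textbf{Assembling the decomposition.} Finally I would assemble the product decomposition $N=\prod_{i\in\mathbb{Z}}\overline{P^i}[-i]$. Given $N$ projective in $\mathrm{Ch}(R)$, each term of the underlying graded module $N^i$ splits as a direct sum coming from the disk in degree $i$ and the disk from degree $i-1$; one reads off the $P^i$ as the ``new generators'' appearing in degree $i$, equivalently $P^i$ is a projective $R$-module with $N$ isomorphic to the product of the disks $\overline{P^i}[-i]$. The main obstacle I anticipate is the bookkeeping distinguishing a direct sum from a direct product in $\mathrm{Ch}(R)$: in an unbounded complex the global object is naturally a product of the disks rather than a sum, but because in each fixed degree only finitely many (indeed two) disks contribute, the degreewise identifications are honest isomorphisms and the product form is the correct normalization. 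I would verify carefully that the differential of $N$, which is multiplication by $x$, matches the identity differentials of the disks under this splitting, since this is exactly the content of ``the differential corresponding to multiplication by $x$ is exactly the identity of $N$'' in the definition of $\overline{N}$.
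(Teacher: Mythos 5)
The paper offers no proof of Lemma \ref{lem 3.2} at all --- it is introduced with ``The next result is well-known'' --- so there is nothing to compare against except the standard argument, and your proposal is essentially a correct rendering of that standard argument: graded versus ungraded projectivity via group-graded ring theory, transport through the isomorphism $\mathrm{GrMod}(A)\cong\mathrm{Ch}(R)$, and the identification of projective complexes as contractible complexes of projectives, i.e.\ sums of disks $\overline{P^i}[-i]$. Your observation that the product and the direct sum of the disks coincide because each degree $j$ receives contributions only from the two factors indexed by $i=j$ and $i=j+1$ is exactly the right way to reconcile the $\prod$ in the statement with the usual $\bigoplus$ decomposition.

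One step is worded loosely enough to deserve tightening. For the direction ``projective in $\mathrm{Mod}(A)$ implies projective in $\mathrm{GrMod}(A)$'' you say that an ungraded projective is a summand of a free module ``which carries a grading, so its graded lift is a graded summand.'' As written this does not work: an ungraded direct-sum decomposition of a graded module need not respect the grading. The correct (and still standard) argument is to start from a \emph{graded} free presentation $\pi\colon F\to N$, use ungraded projectivity of $N$ to get an ungraded splitting $s$ with $\pi s=\mathrm{id}_N$, and then replace $s$ by its degree-zero homogeneous component $s_0$, defined on homogeneous $n\in N^i$ by $s_0(n)=(s(n))_i$; one checks directly that $s_0$ is $A$-linear of degree zero and still splits $\pi$ because $\pi$ preserves degrees. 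This homogenization is the whole content of the Nastasescu--Van Oystaeyen result you cite, so invoking that reference also suffices; I only flag it because your alternative ``observe'' sketch, taken literally, has a gap.
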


\begin{lem}\label{lem 3.3}
Let $M$ be a graded $A$-module. If $M$ is Gorenstein projective in $\mathrm{GrMod}(A)$, then the ungraded module $M$ is Gorenstein projective in $\mathrm{Mod}(A)$.
\end{lem}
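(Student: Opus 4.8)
The plan is to start from a totally acyclic complex of graded projectives witnessing the Gorenstein projectivity of $M$ in $\mathrm{GrMod}(A)$, forget the grading, and then upgrade total acyclicity from the graded to the ungraded setting. Concretely, I would choose a complex $\mathbf{P}:=\cdots\rightarrow P_{1}\rightarrow P_{0}\rightarrow P_{-1}\rightarrow\cdots$ of projective graded $A$-modules with $\mathrm{Hom}_{\mathrm{Gr}}(\mathbf{P}, G)$ acyclic for every graded projective $G$, and with $M=\mathrm{Ker}(P_{0}\rightarrow P_{-1})$. By Lemma \ref{lem 3.2} each $P_{n}$ is projective also as an ungraded $A$-module, and since forgetting the grading is exact, the underlying complex is an acyclic complex of projective ungraded $A$-modules whose syzygy is $M$. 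Thus everything reduces to showing that $\mathrm{Hom}_{A}(\mathbf{P}, Q)$ is acyclic for every projective ungraded $A$-module $Q$.

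First I would reduce to free modules: any projective $Q$ is a direct summand of a free module $F=A^{(I)}$, and since $\mathrm{Hom}_{A}(\mathbf{P}, -)$ carries this splitting to a splitting of complexes, $\mathrm{Hom}_{A}(\mathbf{P}, Q)$ is a direct summand of $\mathrm{Hom}_{A}(\mathbf{P}, F)$; as a direct summand of an acyclic complex is acyclic, it suffices to treat $Q=F$. I would equip $F$ with its standard grading (generators in degree $0$), so that $F$, and likewise each shift $F[d]$, is a graded free, hence graded projective, $A$-module.

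The heart of the argument is to compute the ungraded $\mathrm{Hom}_{A}(\mathbf{P}, F)$ through graded data. Decomposing an $A$-linear map into its homogeneous components $f=\sum_{d}f_{d}$ with $f_{d}\in\mathrm{Hom}_{\mathrm{Gr}}(-, F[d])$, and using that $A$ — hence $F$ — is concentrated in the two internal degrees $0$ and $1$, one checks that on a given homogeneous element only two of the components $f_{d}$ can be nonzero; consequently the assignment $(f_{d})_{d}\mapsto\sum_{d}f_{d}$ is a well-defined bijection and yields an identification $\mathrm{Hom}_{A}(P_{n}, F)\cong\prod_{d\in\mathbb{Z}}\mathrm{Hom}_{\mathrm{Gr}}(P_{n}, F[d])$. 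Since the differential of the Hom-complex is precomposition with the degree-$0$ maps $\partial$ of $\mathbf{P}$, it preserves the internal degree $d$, so this identification upgrades to an isomorphism of complexes $\mathrm{Hom}_{A}(\mathbf{P}, F)\cong\prod_{d\in\mathbb{Z}}\mathrm{Hom}_{\mathrm{Gr}}(\mathbf{P}, F[d])$.

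Finally, each factor $\mathrm{Hom}_{\mathrm{Gr}}(\mathbf{P}, F[d])$ is acyclic, because $F[d]$ is graded projective and $\mathbf{P}$ is totally acyclic in $\mathrm{GrMod}(A)$; since direct products are exact in a module category, the product is again acyclic, whence $\mathrm{Hom}_{A}(\mathbf{P}, F)$ and its summand $\mathrm{Hom}_{A}(\mathbf{P}, Q)$ are acyclic, proving that the ungraded $M$ is Gorenstein projective. The main obstacle is exactly the passage from graded to ungraded Hom-acyclicity: a priori the ungraded Hom is only a ``pointwise-finite'' product sitting between $\bigoplus_{d}$ and $\prod_{d}$ of the graded Homs, and mere sandwiching between two acyclic complexes would not force acyclicity. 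What rescues the argument is the special feature that $A=R[x]/(x^{2})$ lives in only two internal degrees, which collapses the pointwise-finite product onto the genuine direct product and lets exactness of products finish the proof.
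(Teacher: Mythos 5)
Your proof is correct, but it takes a genuinely different route from the paper's. The paper passes through the base ring: it first shows that $\mathrm{Hom}_{R}(\mathbb{P},D)$ is acyclic for every projective $R$-module $D$, by testing total acyclicity in $\mathrm{GrMod}(A)$ against the two-term ``disk'' projectives $\overline{D}[-i]$ and using $\mathrm{Hom}_{\mathrm{Gr}}(\mathbb{P},\overline{D}[-i])\cong\mathrm{Hom}_{R}(\mathbb{P}^{i},D)$; it then converts $R$-linear acyclicity into $A$-linear acyclicity via the Frobenius adjunction $\mathrm{Hom}_{A}(\mathbb{P},A\otimes_{R}Q)\cong\mathrm{Hom}_{R}(\mathbb{P},Q)$ together with the splitting of $A\otimes_{R}Q\rightarrow Q$, exactly as in Lemma \ref{lem 2.2}. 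You instead stay entirely inside $\mathrm{Mod}(A)$: after reducing to a graded free test module $F$, you identify the ungraded Hom-complex with the full product $\prod_{d}\mathrm{Hom}_{\mathrm{Gr}}(\mathbf{P},F[d])$ of graded Hom-complexes and invoke exactness of products. Your key observation --- that because $A$, hence $F$, is concentrated in internal degrees $0$ and $1$, every family of homogeneous components is automatically pointwise finite, so the usual ``pointwise-finite product'' $\mathrm{Hom}_{A}(P_{n},F)$ coincides with the genuine product --- is exactly right and is the honest reason the graded-to-ungraded passage works; you are also right that without this collapse, being sandwiched between $\bigoplus_{d}$ and $\prod_{d}$ would prove nothing. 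Both arguments ultimately exploit the same special feature of $R[x]/(x^{2})$ (finitely many internal degrees), the paper through the structure of graded projectives as products of shifted disks (Lemma \ref{lem 3.2}) plus the Frobenius adjunction, and you through the product decomposition of ungraded Hom; your version is somewhat more self-contained and would generalize verbatim to any graded ring concentrated in finitely many degrees, whereas the paper's version recycles the Frobenius machinery already set up in Section 2.
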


\begin{proof}
Let $M\in \mathrm{GrMod}(A)$. Assume that there is a totally acyclic complexes of projectives $\mathbb{P}:=\cdots \rightarrow P_{1}\rightarrow P_{0}\rightarrow P_{-1}\rightarrow \cdots$ in $\mathrm{GrMod}(A)$, such that $M = \mathrm{Ker}(P_{0}\rightarrow P_{-1})$. Note that every item $P_{j} = \bigoplus_{i\in \mathbb{Z}}P_{j}^{i}$ is a projective module in $\mathrm{Mod}(A)$, and then $\mathbb{P}$ is also an exact sequence of projective modules in $\mathrm{Mod}(A)$.

Let $D$ be a projective left $R$-module. Then $\overline{D}[-i]$ is projective in $\mathrm{GrMod}(A)$ for any $i\in \mathbb{Z}$. Note that for any $N\in\mathrm{GrMod}(A)$, we have $\mathrm{Hom}_{\mathrm{Gr}}(N, \overline{D}[-i])\cong \mathrm{Hom}_{\mathrm{Ch}(R)}(N, \overline{D}[-i])\cong \mathrm{Hom}_{R}(N^{i}, D)$. Then, the complex $\mathrm{Hom}_{\mathrm{Gr}}(\mathbb{P}, \overline{D}[-i])\cong \mathrm{Hom}_{R}(\mathbb{P}^{i}, D)$ is acyclic, where $\mathbb{P}^{i}:= \cdots \rightarrow P_{1}^{i}\rightarrow P_{0}^{i}\rightarrow P_{-1}^{i}\rightarrow \cdots$. Moreover, the complex $\mathrm{Hom}_{R}(\mathbb{P}, D)$ is acyclic for any projective $R$-module $D$.

Let $Q$ be a projective left $A$-module. Then $Q$ is a projective left $R$-module, and $A\otimes_{R}Q$ is a projective $A$-module. The canonical epimorphism $\theta: A\otimes_{R}Q\rightarrow Q$ of $A$-modules is split. Moreover, by the argument in Lemma \ref{lem 2.2}, there is an isomorphism
$\mathrm{Hom}_{A}(\mathbb{P}, A\otimes_{R}Q)\cong \mathrm{Hom}_{R}(\mathbb{P}, Q)$. This implies that the complex $\mathrm{Hom}_{A}(\mathbb{P}, A\otimes_{R}Q)$ is acyclic. Hence, $\mathrm{Hom}_{A}(\mathbb{P}, Q)$ is acyclic. It yields that $\mathbb{P}$ is a totally acyclic complex of projective $A$-modules, and $M$ is Gorenstein projective in $\mathrm{Mod}(A)$.
\end{proof}

\begin{lem}\label{lem 3.4}
Let $M\in\mathrm{GrMod}(A)$. If $M$ is Gorenstein projective in $\mathrm{Mod}(A)$, then there is an exact sequence $0\rightarrow M\rightarrow N\rightarrow L\rightarrow 0$ in $\mathrm{GrMod}(A)$ with $N$ projective, $L$ Gorenstein projective in $\mathrm{Mod}(A)$; and moreover, it also remains exact after applying $\mathrm{Hom}_{\mathrm{Gr}}(-, P)$ for any projective module $P\in\mathrm{GrMod}(A)$.
\end{lem}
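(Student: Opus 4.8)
The plan is to build the required sequence by hand in $\mathrm{GrMod}(A)$ out of coresolutions of the graded pieces $M^i$, and then to verify the two substantive assertions—that $L$ is Gorenstein projective in $\mathrm{Mod}(A)$ and that the sequence stays exact under $\mathrm{Hom}_{\mathrm{Gr}}(-,P)$—by pushing everything down to the base ring $R$ via the Frobenius isomorphisms already exploited in Lemmas \ref{lem 2.2} and \ref{lem 3.3}.

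First I would record that each graded piece $M^i$ is Gorenstein projective over $R$: by Lemma \ref{lem 2.1} the underlying module $M|_R=\bigoplus_i M^i$ is Gorenstein projective over $R$, and each $M^i$ is an $R$-direct summand of it, hence Gorenstein projective (the class is closed under direct summands). So for every $i$ I may fix a $\mathrm{Hom}_R(-,\mathrm{proj})$-exact short exact sequence $0\to M^i\xrightarrow{\iota^i}P^i\xrightarrow{\pi^i}W^i\to 0$ with $P^i$ projective and $W^i$ Gorenstein projective over $R$ (the right-hand half of a complete resolution of $M^i$). I then set $N:=\prod_{i}\overline{P^i}[-i]$, which is projective in $\mathrm{GrMod}(A)$ by Lemma \ref{lem 3.2} and satisfies $N^j=P^j\oplus P^{j+1}$, and I define $f\colon M\to N$ degreewise by $f^j(m)=(\iota^j m,\ \iota^{j+1}\partial^j m)$, where $\partial^j$ denotes multiplication by $x$. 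A direct check shows $f$ is a chain map (using $\partial^{j+1}\partial^j=0$) and is injective (since $\iota^j$ is), so with $L:=\mathrm{coker}\,f$ I obtain the graded sequence $0\to M\to N\to L\to 0$; computing the cokernel degreewise gives $L^j\cong W^j\oplus P^{j+1}$, so each $L^j$ is Gorenstein projective over $R$.

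The heart of the matter is the $\mathrm{Hom}$-exactness. Restricting to $R$, the sequence becomes the direct sum over $j$ of the degreewise sequences $0\to M^j\to P^j\oplus P^{j+1}\to L^j\to 0$, and each of these is $\mathrm{Hom}_R(-,Q)$-exact for projective $R$-modules $Q$: any $R$-map $M^j\to Q$ extends along $\iota^j$ because $\iota^j$ is a proper monomorphism, hence extends along $f^j$ via the first coordinate. Thus the underlying $R$-sequence is $\mathrm{Hom}_R(-,\mathrm{proj})$-exact. I then invoke the Frobenius isomorphism $\mathrm{Hom}_A(-,A\otimes_R Q)\cong\mathrm{Hom}_R(-,Q)$ together with the fact that every projective $A$-module is a direct summand of some $A\otimes_R Q$; this promotes the $R$-level exactness to $\mathrm{Hom}_A(-,\mathrm{proj})$-exactness of $0\to M\to N\to L\to 0$. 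For the graded ``moreover'' clause I use the same degreewise extension property through the identification $\mathrm{Hom}_{\mathrm{Gr}}(-,\overline{D}[-k])\cong\mathrm{Hom}_R((-)^k,D)$ recorded in Lemma \ref{lem 3.3}.

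Finally, to conclude that $L$ is Gorenstein projective in $\mathrm{Mod}(A)$ I would avoid building a complete resolution of $L$ from scratch. From the $\mathrm{Hom}_A(-,\mathrm{proj})$-exactness just obtained, $\mathrm{Ext}^1_A(L,Q)=0$ for every projective $A$-module $Q$. Since $M$ is Gorenstein projective over $A$, its cosyzygy $C=\mathrm{coker}(M\hookrightarrow P_0)$ in a complete resolution is again Gorenstein projective, giving $0\to M\to P_0\to C\to 0$ with $P_0$ projective. Forming the pushout $Y$ of $N\leftarrow M\to P_0$ yields two extensions $0\to N\to Y\to C\to 0$ and $0\to P_0\to Y\to L\to 0$; the first splits because $\mathrm{Ext}^1_A(C,N)=0$ ($C$ Gorenstein projective, $N$ projective) and the second splits because $\mathrm{Ext}^1_A(L,P_0)=0$. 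Hence $N\oplus C\cong P_0\oplus L$; the left side is Gorenstein projective, so $L$ is a direct summand of a Gorenstein projective module and therefore Gorenstein projective. The main obstacle is exactly this passage to $\mathrm{Mod}(A)$: the grading is of no help here, since projective $A$-modules need not be gradable, and the decisive move is to trade the ungraded $\mathrm{Hom}_A(-,\mathrm{proj})$-exactness for the transparent degreewise $R$-statement through the Frobenius isomorphism, after which the pushout argument closes the proof.
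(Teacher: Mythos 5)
Your proposal is correct and follows essentially the same route as the paper: the same graph-type embedding of $M$ into the graded projective $\prod_i\overline{P^i}[-i]$ built from coresolutions of the pieces $M^i$, the same identification $\mathrm{Hom}_{\mathrm{Gr}}(-,\overline{D}[-i])\cong\mathrm{Hom}_R((-)^i,D)$ to get the graded $\mathrm{Hom}$-exactness, and the same Frobenius isomorphism plus direct-summand trick to pass from $R$ to $A$. The only (harmless) divergences are bookkeeping: you compute the cokernel degreewise where the paper invokes Schanuel's lemma, and your explicit pushout argument for the Gorenstein projectivity of $L$ over $A$ fills in a step the paper leaves implicit.
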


\begin{proof}
We consider the graded $A$-module $M=\bigoplus_{i\in \mathbb{Z}}M^{i}$ as an $R$-complex with differential $\delta$ of degree 1. Since $M$ is Gorenstein projective in $\mathrm{Mod}(A)$, each $M^{i}$ is a Gorenstein projective $A$-module. By Lemma \ref{lem 2.1}, $M$ is a Gorenstein projective $R$-module, and so is $M^{i}$ for any $i\in \mathbb{Z}$. Then there exists an exact sequence $0\rightarrow M^{i}\stackrel{f^i}\rightarrow G^{i}\rightarrow H^{i}\rightarrow 0$ in $\mathrm{Mod}(R)$ with $G^{i}$ projective and $H^{i}$ Gorenstein projective.
Let $D$ be any projective $R$-module. For any $g^i: M^i\rightarrow D$, there exists an $R$-homomorphism $h^i: G^i\rightarrow D$ such that $g^i = h^{i}f^{i}$.

Consider the following commutative diagram
$$\xymatrix@C=30pt{
\vdots\ar[d] & & &\vdots\ar[d]\\
M^{i-1}\ar[dd]_{\delta}\ar[rd]_{g^{i}\delta}\ar[rrr]^{\left(\begin{smallmatrix}f^{i-1} \\f^{i}\delta\end{smallmatrix}\right)\quad} & &
& N^{i-1}= G^{i-1}\oplus G^{i}\ar@{-->}[lld]^{(0\quad h^{i})}\ar[dd]^{\left(\begin{smallmatrix}0 & 1 \\0 & 0\end{smallmatrix}\right)}\\
&D\ar@{=}[dd] \\
M^{i}\ar[d]\ar[rd]_{g^i}\ar[rrr]^{\left(\begin{smallmatrix}f^{i} \\f^{i+1}\delta\end{smallmatrix}\right)} & &
& N^{i}= G^{i}\oplus G^{i+1}\ar@{-->}[lld]^{(h^{i}\quad 0)}\ar[d]\\
\vdots &D & &\vdots}$$
This implies that there exists an exact sequences $0\rightarrow M\rightarrow N\rightarrow L\rightarrow 0$ in $\mathrm{GrMod}(A)$ with $N$ projective, such that the induced sequence $0\rightarrow\mathrm{Hom}_{\mathrm{Gr}}(L, \overline{D}[-i])\rightarrow\mathrm{Hom}_{\mathrm{Gr}}(N, \overline{D}[-i])\rightarrow\mathrm{Hom}_{\mathrm{Gr}}(M, \overline{D}[-i])\rightarrow 0$ is still exact. Moreover, we have an exact sequence
$$0\rightarrow \mathrm{Hom}_{R}(L^{i}, D)\rightarrow \mathrm{Hom}_{R}(N^{i}, D)\rightarrow \mathrm{Hom}_{R}(M^{i}, D)\rightarrow \mathrm{Ext}_{R}^{1}(L^{i}, D)\rightarrow 0.$$ So $\mathrm{Ext}_{R}^{1}(L^{i}, D)=0$. Specifically,  $\mathrm{Ext}_{R}^{1}(L^{i}, G^{i})=0$, and then
we get the following commutative diagram:
$$\xymatrix@C=30pt{
0\ar[r] &M^{i}\ar[r]\ar@{=}[d] &N^{i}\ar[r]\ar[d]&L^{i}\ar[r]\ar[d]&0\\
0\ar[r] &M^{i}\ar[r] &G^{i}\ar[r]&H^{i}\ar[r]&0
}$$
By a version of Schanuel's Lemma, we have $L^{i}\oplus G^{i}=H^{i}\oplus N^{i}$, and then $L^{i}$ is Gorenstein projective in $\mathrm{Mod}(R)$.
So $L=\bigoplus_{i\in \mathbb{Z}}L^{i}$ is also a Gorenstein projective $R$-module.

Let $Q$ be a projective module in $\mathrm{Mod}(A)$. Then $\mathrm{Ext}_{A}^{1}(L, A\otimes_{R}Q)\cong \mathrm{Ext}_{R}^{1}(L, Q)=0$. Since $Q$ is a direct summand of $A\otimes_{R}Q$, $\mathrm{Ext}_{A}^{1}(L, Q)=0$, and then it yields from the exact sequence $0\rightarrow M\rightarrow N\rightarrow L\rightarrow 0$ in  $\mathrm{Mod}(A)$ that $L$ is a Gorenstein projective $A$-module.

Let $P\in\mathrm{GrMod}(A)$ be projective. Then $P = \prod_{i\in \mathbb{Z}}\overline{P^{i}}[-i]$ for a family of projective $R$-modules $\{P^i\}_{i\in \mathbb{Z}}$. Note that for any graded $A$-module $M$, $\mathrm{Hom}_{\mathrm{Gr}}(M, P)\cong \prod_{i\in \mathbb{Z}}\mathrm{Hom}_{R}(M^{i}, P^i)$. Then, from the exact sequence
$$0\rightarrow \prod_{i\in \mathbb{Z}}\mathrm{Hom}_{R}(L^{i}, P^i)\rightarrow \prod_{i\in \mathbb{Z}}\mathrm{Hom}_{R}(N^{i}, P^i)\rightarrow \prod_{i\in \mathbb{Z}}\mathrm{Hom}_{R}(M^{i}, P^i)\rightarrow 0,$$
we deduce the desired exact sequence $$0\longrightarrow \mathrm{Hom}_{\mathrm{Gr}}(L, P)\longrightarrow \mathrm{Hom}_{\mathrm{Gr}}(N, P)
\longrightarrow \mathrm{Hom}_{\mathrm{Gr}}(M, P)\longrightarrow 0.$$
\end{proof}

\begin{lem}\label{lem 3.5}
Let $M\in\mathrm{GrMod}(A)$. If $M$ is Gorenstein projective in $\mathrm{Mod}(A)$, then there is an exact sequence $0\rightarrow K\rightarrow N\rightarrow M\rightarrow 0$ in $\mathrm{GrMod}(A)$, where $N$ is projective and $K$ is Gorenstein projective in $\mathrm{Mod}(A)$. Moreover, it also remains exact after applying $\mathrm{Hom}_{\mathrm{Gr}}(-, P)$ for any projective module $P\in\mathrm{GrMod}(A)$.
\end{lem}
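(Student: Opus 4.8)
The plan is to prove Lemma~\ref{lem 3.5} by dualizing, arrow by arrow, the construction used in Lemma~\ref{lem 3.4}. As there, I view $M=\bigoplus_{i\in\mathbb Z}M^i$ as an $R$-complex with differential $\delta$ (multiplication by $x$) of degree $1$. Since $M$ is Gorenstein projective in $\mathrm{Mod}(A)$, Lemma~\ref{lem 2.1} gives that the underlying $R$-module $M$ is Gorenstein projective; as $M=\bigoplus_i M^i$ is precisely its underlying $R$-module decomposition (the degree-$0$ copy of $R$ acts inside each $M^i$), every component $M^i$ is an $R$-direct summand of $M$, hence Gorenstein projective over $R$. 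For each $i$ I then fix a ``left syzygy'' short exact sequence $0\to K^i\to G^i\stackrel{p^i}\to M^i\to 0$ in $\mathrm{Mod}(R)$ with $G^i$ projective and $K^i$ Gorenstein projective over $R$ (the left half of a complete resolution of the Gorenstein projective module $M^i$).

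The crux is to assemble these componentwise data into a single graded surjection; this is the exact dual of the commutative diagram in Lemma~\ref{lem 3.4}, and I expect it to be the main technical obstacle. I set $N^i=G^{i-1}\oplus G^i$, let the multiplication-by-$x$ differential $N^i\to N^{i+1}=G^i\oplus G^{i+1}$ be $(a,b)\mapsto(b,0)$, and define $\rho^i\colon N^i\to M^i$ by $\rho^i(a,b)=\delta\,p^{i-1}(a)+p^i(b)$. The bookkeeping to carry out is: (i) $\rho=(\rho^i)$ is a degree-zero graded $A$-homomorphism, i.e. $\delta\rho^i=\rho^{i+1}d_N^i$, which collapses to $\delta^2=0$; and (ii) $\rho$ is surjective, since already $\rho^i(0,\,\cdot\,)=p^i$ is onto. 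By construction $N$ is a coproduct of disk complexes on the projective $R$-modules $G^i$, each a shift of $\overline{G^i}$, so $N$ is projective in $\mathrm{GrMod}(A)$ by Lemma~\ref{lem 3.2}. This produces a short exact sequence $0\to K\to N\stackrel{\rho}\to M\to 0$ in $\mathrm{GrMod}(A)$ whose degree-$i$ part is $0\to K^i\to N^i\to M^i\to 0$ in $\mathrm{Mod}(R)$.

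It then remains to check the two asserted properties. For Gorenstein projectivity of $K$: each $K^i$ is the kernel of an epimorphism from the projective $N^i$ onto the Gorenstein projective $R$-module $M^i$, so by a version of Schanuel's Lemma together with closure of Gorenstein projectives under direct summands, $K^i$ is Gorenstein projective over $R$; hence $K=\bigoplus_i K^i$ is Gorenstein projective over $R$. Now, exactly as at the end of Lemma~\ref{lem 3.4}, for any projective $A$-module $Q$ the isomorphism $\mathrm{Ext}^1_A(K,A\otimes_R Q)\cong\mathrm{Ext}^1_R(K,Q)=0$ of Lemma~\ref{lem 2.2} forces $\mathrm{Ext}^1_A(K,Q)=0$ (as $Q$ is a direct summand of $A\otimes_R Q$); combined with the sequence $0\to K\to N\to M\to 0$ in $\mathrm{Mod}(A)$, in which $N$ is projective and $M$ is Gorenstein projective, this yields that $K$ is Gorenstein projective in $\mathrm{Mod}(A)$.

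Finally, for the $\mathrm{Hom}_{\mathrm{Gr}}(-,P)$-exactness: given a projective $P\in\mathrm{GrMod}(A)$, Lemma~\ref{lem 3.2} writes $P=\prod_{i}\overline{P^i}[-i]$ with each $P^i$ a projective $R$-module, and the natural isomorphism $\mathrm{Hom}_{\mathrm{Gr}}(X,P)\cong\prod_i\mathrm{Hom}_R(X^i,P^i)$ from the proof of Lemma~\ref{lem 3.4} reduces the application of $\mathrm{Hom}_{\mathrm{Gr}}(-,P)$ to the graded sequence to the application of $\prod_i\mathrm{Hom}_R(-,P^i)$ to the componentwise sequences $0\to K^i\to N^i\to M^i\to 0$. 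Each of these stays exact because $M^i$ is Gorenstein projective over $R$, so $\mathrm{Ext}^1_R(M^i,P^i)=0$; a product of short exact sequences is short exact, giving exactness of $0\to\mathrm{Hom}_{\mathrm{Gr}}(M,P)\to\mathrm{Hom}_{\mathrm{Gr}}(N,P)\to\mathrm{Hom}_{\mathrm{Gr}}(K,P)\to 0$, as required. Thus the only genuinely delicate point is the explicit assembly in the second paragraph, making $\rho$ an honest degree-zero chain map with projective source; everything else is formal once the componentwise Gorenstein projectivity of the $M^i$ is in hand.
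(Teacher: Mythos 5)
Your proof is correct, but you do noticeably more work than the paper on one step: the construction of the epimorphism $N\rightarrow M$. The paper simply takes \emph{any} short exact sequence $0\rightarrow K\rightarrow N\rightarrow M\rightarrow 0$ with $N$ projective, which exists because $\mathrm{GrMod}(A)$ has enough projectives, and then observes that everything else is automatic: $K$ is Gorenstein projective in $\mathrm{Mod}(A)$ because the class of Gorenstein projectives is projectively resolving (kernel of an epimorphism from a projective onto a Gorenstein projective), and the $\mathrm{Hom}_{\mathrm{Gr}}(-,P)$-exactness follows degreewise from $\mathrm{Ext}^1_R(M^i,P^i)=0$, exactly as in your last paragraph. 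Your explicit assembly of $N$ as a direct sum of disk complexes on the $G^i$, dualizing the diagram of Lemma \ref{lem 3.4}, is valid (the chain-map and surjectivity checks go through, and the sum of disks is indeed projective), but it is not needed here: the asymmetry between Lemmas \ref{lem 3.4} and \ref{lem 3.5} is precisely that a surjection from a projective graded module exists for free, whereas a suitable embedding into one must be built by hand. Your verification that $K$ is Gorenstein projective over $A$ via the Ext-vanishing of Lemma \ref{lem 2.2} (note you need $\mathrm{Ext}^i_A(K,Q)=0$ for all $i>0$, which that lemma does provide, not just $i=1$) is a legitimate alternative to the paper's one-line appeal to the resolving property. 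In short: same essential mechanism, with an unnecessary but harmless explicit construction grafted on.
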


\begin{proof}
Let $M=\bigoplus_{i\in \mathbb{Z}}M^{i}\in\mathrm{GrMod}(A)$, $P$ a projective module in $\mathrm{GrMod}(A)$. Then $P = \prod_{i\in \mathbb{Z}}\overline{P^{i}}[-i]$, where $P^i$ are projective $R$-modules. Moreover, $\mathrm{Hom}_{\mathrm{Gr}}(M, P)\cong \prod_{i\in \mathbb{Z}}\mathrm{Hom}_{R}(M^{i}, P^i)$.

Since the category $\mathrm{GrMod}(A)$ has enough projectives, there exists an exact sequence $0\rightarrow K\rightarrow N\rightarrow M\rightarrow 0$ in $\mathrm{GrMod}(A)$ with $N$ projective. Considered as an exact sequence in $\mathrm{Mod}(A)$, it yields that $K$ is Gorenstein projective in $\mathrm{Mod}(A)$ since the class of Gorenstein projective modules is closed under taking kernel of epimorphisms.

Since $M^{i}$ is Gorenstein projective in $\mathrm{Mod}(A)$, it follows from Lemma \ref{lem 2.1} that $M^{i}$ is also Gorenstein projective as an $R$-module. Then the sequence
$$0\rightarrow \prod_{i\in \mathbb{Z}}\mathrm{Hom}_{R}(M^{i}, P^i)\rightarrow \prod_{i\in \mathbb{Z}}\mathrm{Hom}_{R}(N^{i}, P^i)\rightarrow \prod_{i\in \mathbb{Z}}\mathrm{Hom}_{R}(K^{i}, P^i)\rightarrow \prod_{i\in \mathbb{Z}}\mathrm{Ext}_{R}^{1}(M^{i}, P^i)=0$$ is exact. This yields the desired exact sequence $$0\longrightarrow \mathrm{Hom}_{\mathrm{Gr}}(M, P)\longrightarrow \mathrm{Hom}_{\mathrm{Gr}}(N, P)
\longrightarrow \mathrm{Hom}_{\mathrm{Gr}}(K, P)\longrightarrow 0.$$
\end{proof}

\subsection*{Proof of Theorem \ref{thm 3.1}}
(1)$\Rightarrow$(2) is precisely the result of Lemma \ref{lem 3.3}. (2)$\Rightarrow$(3) follows from Lemma \ref{lem 2.1} since $A=R[x]/(x^{2})$ is a Frobenius extension of $R$.

(2)$\Rightarrow$(1). Let $M\in \mathrm{GrMod}(A)$, and $M$ is Gorenstein projective in $\mathrm{Mod}(A)$. By Lemma \ref{lem 3.5}, there is an exact sequence $0\rightarrow K_{1}\rightarrow P_{1}\rightarrow M\rightarrow 0$ in $\mathrm{GrMod}(A)$, where $P_{1}$ is projective and $K_{1}$ is Gorenstein projective in $\mathrm{Mod}(A)$, which is also $\mathrm{Hom}_{\mathrm{Gr}}(-, P)$-exact for any projective module $P\in\mathrm{GrMod}(A)$.
Repeat this procedure, we get a $\mathrm{Hom}_{\mathrm{Gr}}(-, P)$-exact exact sequence $\cdots\rightarrow P_{2}\rightarrow P_{1}\rightarrow M\rightarrow 0$ in $\mathrm{GrMod}(A)$ with $P_{i}$ projective. Similarly, by applying Lemma \ref{lem 3.4}, we have a $\mathrm{Hom}_{\mathrm{Gr}}(-, P)$-exact exact sequence $0\rightarrow M\rightarrow P_{0}\rightarrow P_{-1}\rightarrow\cdots$ in $\mathrm{GrMod}(A)$ with $P_{i}$ projective.
Splice this two sequences together, and then we obtain a totally acyclic complex of projectives in $\mathrm{GrMod}(A)$, such that $M$ is Gorenstein projective in $\mathrm{GrMod}(A)$.

(3)$\Rightarrow$(2). By Lemma \ref{lem 2.2}(1), it suffices to construct the right part of the totally acyclic complex of projective $A$-modules. Since $M$ is a Gorenstein projective $R$-module, the argument in Lemma \ref{lem 3.4} works, that is, there is an exact sequence
$0\rightarrow M\rightarrow P_{0}\rightarrow L_{1}\rightarrow 0$ in $\mathrm{GrMod}(A)$, where $P_{0}$ is projective and $L_{1}$ is Gorenstein projective in $\mathrm{Mod}(R)$. Moreover, the sequence is $\mathrm{Hom}_{R}(-, D)$-exact for any projective $R$-module $D$. Let $P$ be any projective $A$-module. Thus, the above sequence is $\mathrm{Hom}_{A}(-, A\otimes_{R}P)$-exact, and furthermore,  $\mathrm{Hom}_{A}(-, P)$-exact.
Successively, we build a $\mathrm{Hom}_{A}(-, P)$-exact exact sequence $0\rightarrow M\rightarrow P_{0}\rightarrow P_{-1}\rightarrow\cdots$ with $P_{i}$ being projective $A$-modules. This completes the proof.
$\hfill\square$

\smallskip
Finally, let us mention recent works on $R[x]/(x^{2})$-modules. Note that $A=R[x]/(x^{2})$ is the ring of dual numbers over $R$, and differential $R$-modules (i.e. modules equipped with an $R$-endomorphism of square zero) are just $A$-modules. Avramov, Buchweitz and Iyengar \cite{ABI07} introduce projective, free and flat classes for differential modules and give some inequalities. These results specialize to basic theorems in commutative algebra and algebraic topology. Ringel and Zhang \cite{RZ17} investigate representations of quivers over the algebra of dual numbers; for a hereditary Artin algebra $R$, a bijective correspondence between the stable category of finitely generated Gorenstein projective differential $R$-modules and the category of finitely generated $R$-modules is given. Wei \cite{Wei15} shows that for any ring, a differential module is Gorenstein projective if and only if its underlying module is Gorenstein projective.

\begin{ack*}
This work was supported by National Natural Science Foundation of China (Grant No. 11401476) and China Postdoctoral Science Foundation (Grant No. 2016M591592). The author thanks Professor Chen Xiao-Wu for sharing his thoughts on this topic. This research was completed when author was a postdoctor at Fudan University supervised by Professor Wu Quan-Shui. The author thanks the referee for helpful comments and suggestions.
\end{ack*}

\bigskip


\begin{thebibliography}{99}
\bibitem{ABI07} Avramov L L, Buchweitz R O, Iyengar S. Class and rank of differential modules. Invent Math, 2007, 169: 1-35

\bibitem{BR07} Beligiannis A, Reiten I. Homological and Homotopical Aspects of Torsion Theories. Mem Amer Math Soc, vol.188 (883), 2007

\bibitem{BF93} Bell A, Farnsteiner R. On the theory of Frobenius extensions and its applications to Lie superalgebras. Trans Amer Math Soc, 1993, 335: 407-424

\bibitem{Buc87} Buchweitz R O. Matrix Cohen-Macaulay modules and Tate cohomology over Gorenstein rings. Hamburg, 1987, 155 pp, unpublished manuscript

\bibitem{Chen13} Chen X W. Totally reflexive extensions and modules. J Algebr, 2013, 379: 322-332

\bibitem{EJ00} Enochs E E, Jenda O M G. Relative Homological Algebra. De Gruyter Expositions
in Mathematics no. 30, New York: Walter De Gruyter, 2000

\bibitem{EGR98} Enochs E E, Garc\'{i}a Rozas J R. Gorenstein injective and projective complexes. Comm Algebra, 1998,
26: 1657-1674

\bibitem{ELR02} Enochs E E, L\'{o}pez-Ramos J A. Kaplansky classes. Rend Sem Mat Univ Padova, 2002, 107: 67-79

\bibitem{FMS97} Fischman D, Montgomery S, Schneider H J. Frobenius extensions of subalgebras of Hopf algebras.
Trans Amer Math Soc, 1997, 349: 4857-4895

\bibitem{Gil07} Gillespie J. Kaplansky classes and derived categories. Math Z, 2007, 257: 811-843

\bibitem{GH10} Gillespie J, Hovey M. Gorenstein model structures and generalized derived categories. Proc Edinb Math Soc,
2010, 53: 675-696

\bibitem{Hov02} Hovey M. Cotorsion pairs, model category structures, and representation theory. Math Z, 2002, 24: 553-592

\bibitem{HS12} Huang Z Y, Sun J X. Invariant properties of representations under excellent extensions. J Algebra, 2012, 358: 87-101

\bibitem{Kad99} Kadison L. New Examples of Frobenius Extensions. Univ Lecture Ser, vol.14, Amer Math Soc, Providence, RI, 1999

\bibitem{Kas54} Kasch F. Grundlagen einer Theorie der Frobeniuserweiterungen. Math Ann, 1954, 127: 453-474

\bibitem{LS13} Li F, Sun L G. Derived representation type and Gorenstein projective modules of an algebra under crossed product. Sci China Ser A, 2013, 56: 531-540

\bibitem{LZ11} Liu Z K, Zhang C X. Gorenstein projective dimensions of complexes. Acta Math Sinica, English Series, 2011, 27: 1395-1404

\bibitem{Mor67} Morita K. The endomorphism ring theorem for Frobenius extensions. Math Z, 1967, 102: 385-404

\bibitem{NT60} Nakayama T, Tsuzuku T. On Frobenius extensions I, Nagoya Math J, 1960, 17: 89-110;
On Frobenius extensions II, Nagoya Math J, 1961, 19: 127-148

\bibitem{Pie82} Pierce R S. Associative Algebras. GTM 88, New York: Springer, 1982

\bibitem{RZ17} Ringel C M, Zhang P. Representations of quivers over the algebra of dual numbers. J Algebra, 2017, 475: 327-360

\bibitem{Sch92} Schneider H J. Normal basis and transitivity of crossed products for Hopf algebras, J Algebra, 1992, 151: 289-312

\bibitem{Sto13} \v{S}\v{t}ov\'{i}\v{c}ek  J. Deconstructibility and the Hill Lemma in Grothendieck categories. Froum Math, 2013, 25: 193-219

\bibitem{Wei15} Wei J Q. Gorenstein homological theory for differential modules. Proc Roy Soc Edinburgh Sect A Math, 2015, 145: 639-655

\bibitem{Yang11} Yang G. Gorenstein projective, injective and flat complexes. Acta Math Sinica, Chinese Series, 2011, 54: 451-460
\end{thebibliography}
\end{document}